\newtheorem{theorem}{Theorem}[section]
\newtheorem{lemma}[theorem]{Lemma}
\newtheorem{corollary}[theorem]{Corollary}
\theoremstyle{definition}
\newtheorem{example}[theorem]{Example}
\newtheorem{remark}[theorem]{Remark}
\newtheorem{case}{Case}
\newcommand{\Complex}{\mathbb C} 
\newcommand{\mf}{\mathfrak}
\newcommand{\ul}{\underline}
\newcommand{\Cn}{C_{\leq n}}
\newcommand{\dd}{\hspace{.1cm}|\hspace{.1cm}}
\newcommand{\ind}{{\rm ind \hspace{.1cm}}}
\newcommand{\rk}{{\rm rk \hspace{.1cm}}}
\newcommand{\A}{{\rm A}}
\newcommand{\C}{{\rm C}}
\newcommand{\ad}{{\rm ad \hspace{.05cm}}}
\newcommand{\lc}{\left\lceil}
\newcommand{\rc}{\right\rceil}
\newcommand{\lf}{\left\lfloor}
\newcommand{\rf}{\right\rfloor}
\newcommand{\dk}{\rm DK}
\newcommand{\sright}[1]{\sigma(\overrightarrow{#1})}
\newcommand{\sleft}[1]{\sigma(\overleftarrow{#1})}
\newcommand{\ceil}[1]{\left \lceil #1 \right \rceil }
\newcommand{\mir}{\rm mir}
\newcommand{\tright}[1]{\tau(\overrightarrow{#1})}
\newcommand{\tleft}[1]{\tau(\overleftarrow{#1})}
\begin{document}

\title{\bf The unbroken spectrum of Frobenius seaweeds II:  type-B and type-C}
\author{Alex Cameron$^*$, Vincent E. Coll, Jr.$^{**}$, Matthew Hyatt$^{\dagger}$, and Colton Magnant$^{\dagger\dagger}$ }
\maketitle

\noindent
\textit{$^*$Department of Mathematics, Lehigh University, Bethlehem, PA, USA: akc214@lehigh.edu }\\
\textit{$^{**}$Department of Mathematics, Lehigh University, Bethlehem, PA, USA:  vec208@lehigh.edu}\\
\textit{$^{\dagger}$FactSet Research Systems, New York, NY, USA:  matthewdhyatt@gmail.com }\\
\textit{$^{\dagger\dagger}$
Department of Mathematics, Clayton State University, Morrow, GA, USA: dr.colton.magnant@gmail.com }

%Department of Mathematical Sciences, Georgia Southern University, Statesboro, GA, USA

\begin{abstract}
\noindent
Analogous to the Type-$A_{n-1}=\mathfrak{sl}(n)$ case, we show that if $\mathfrak{g}$ is a Frobenius seaweed subalgebra of $B_{n}=\mathfrak{so}(2n+1)$ or
$C_{n}=\mathfrak{sp}(2n)$, then the spectrum of the adjoint of a principal element consists of an unbroken set of integers whose multiplicities have a symmetric distribution.    
\end{abstract}

\noindent
\textit{Mathematics Subject Classification 2010}: 17B20, 05E15

\noindent 
\textit{Key Words and Phrases}: Frobenius Lie algebra, seaweed, biparabolic, principal element, Dynkin diagram, spectrum, regular functional, Weyl group

%%%%%%%%%%%%%%%%%%%%%%%%%%%%%%%%%%%%%%%%%%%%%%%%%%%%%%%%%%%%%%%%%%%%%%
%%%%%%%%%%%%%%%%%%%%%%%%%%%%%%%%%%%%%%%%%%%%%%%%%%%%%%%%%%%%%%%%%%%%%%
\section{Introduction}\label{sec:intro}
%%%%%%%%%%%%%%%%%%%%%%%%%%%%%%%%%%%%%%%%%%%%%%%%%%%%%%%%%%%%%%%%%%%%%%
%%%%%%%%%%%%%%%%%%%%%%%%%%%%%%%%%%%%%%%%%%%%%%%%%%%%%%%%%%%%%%%%%%%%%%
\textit{Notation: } All Lie algebras will be finite dimensional over $\Complex$, and the Lie multiplication will be denoted by [-,-].

\bigskip
The \textit{index} of a Lie algebra is an important algebraic invariant and, for \textit{seaweed algebras}, is bounded by the algebra's rank: $ \ind \mf{g} \leq \rk \mf{g}$, (see \textbf{\cite{dk}}). More formally, the index of a Lie algebra $\mf{g}$ is given by 

\[\ind \mf{g}=\min_{F\in \mf{g^*}} \dim  (\ker (B_F)),\]

\noindent where $F$ is a linear form on $\mf{g}$, and $B_F$ is the associated skew-symmetric bilinear \textit{Kirillov form}, defined by $B_F(x,y)=F([x,y])$ for $x,y\in \mf{g}$.  On a given $\mf{g}$, index-realizing functionals are called \textit{regular} and exist in profusion, being dense in both the Zariski and Euclidean topologies of $\mf{g}^*$. 
%Lie algebras which have index zero form a distinguished class and are called \textit{Frobenius}\footnote{Frobenius algebras can alternatively be defined cohomologically as follows. A Lie algebra $\mathfrak{g}$ is called quasi-Frobenius if it is equipped with a non-degenerate skew-symmetric bilinear form $\beta: \mathfrak{g} \times \mathfrak{g} \rightarrow \Complex$, which is a Lie 2-cocycle of $\mathfrak{g}$ with coefficients in $\Complex$.  If $\beta$ is a coboundary, then $\mathfrak{g}$ is Frobenius.}. 

Of particular interest are Lie algebras which have index zero. Such algebras are called \textit{Frobenius}
and have been studied extensively from the point of view of invariant theory \textbf{\cite{Ooms1}} and are of special interest in deformation and quantum group theory stemming from their connection with the classical Yang-Baxter equation (see \textbf{\cite{G1}} and \textbf{\cite{G2}}).
A regular functional $F$ on a Frobenius Lie algebra $\mathfrak{g}$ is called a \textit{Frobenius functional}; equivalently, $B_F(-,-)$ is non-degenerate.  Suppose $B_F(-,-)$ is non-degenerate and let $[F]$ be the matrix of $B_F(-,-)$ relative to some basis 
$\{x_1,\dots,x_n  \}$ of $\mathfrak{g}$.  In \textbf{\cite{BD}}, Belavin and Drinfeld showed that   

\[
\sum_{i,j}[F]^{-1}_{ij}x_i\wedge x_j
\]

\noindent
is the infinitesimal of a \textit{Universal Deformation Formula} (UDF) based on $\mathfrak{g}$.  A UDF based on $\mathfrak{g}$ can be used to deform the universal enveloping algebra of $\mathfrak{g}$ and also the function space of any Lie group which contains $\mathfrak{g}$ in its Lie algebra of derivations.
Despite the existence proof of Belavin and Drinfel'd, 
only two UDF's are known:  the exponential and quasi-exponential.  These are based, respectively, on the abelian \textbf{\cite{UDF1}} and non-abelian \textbf{\cite{UDF2}} Lie algebras of dimension two (see also \textbf{\cite{Twist}}).

A Frobenius functional can be algorithmically produced as a by-product of the Kostant Cascade (see \textbf{\cite{Joseph4} and \cite{Kostant}}).  
If $F$ is a Frobenius functional on $\mathfrak{g}$, then the natural map $\mathfrak{g} \rightarrow \mathfrak{g}^*$ defined by $x \mapsto  F([x,-])$ is an isomorphism.  The image of $F$ under the inverse of this map is called a \textit{principal element} of $\mathfrak{g}$ and will be denoted $\widehat{F}$.  It is the unique element of $\mathfrak{g}$ such that 

$$
F\circ \ad \widehat{F}= F([\widehat{F},-]) = F.  
$$

As a consequence of Proposition 3.1 in \textbf{\cite{Ooms2}}, Ooms established that the spectrum of the adjoint of a principal element of a Frobenius Lie algebra is independent of the principal element chosen to compute it (see also \textbf{\cite{G2}}, Theorem 3).  Generally, the eigenvalues of ad$\widehat{F}$ can take on virtually any value (see \textbf{\cite{DIATTA}} for examples). But, in their formal study of principal elements \textbf{\cite{G3}}, Gerstenhaber and Giaquinto showed that if $\mathfrak{g}$ is a Frobenius seaweed subalgebra of $A_{n-1}=\mathfrak{sl}(n)$, then the spectrum of the adjoint of a principal element of $\mathfrak{g}$ consists entirely of integers.\footnote{Joseph, seemingly unaware of the type-A result of \textbf{{\cite{G3}}} used different methods to strongly extend this integrity result to all seaweed subalgebras of semisimple Lie algebras \textbf{\cite{Joseph2}}.}  Subsequently, the last three of the current authors showed that this spectrum must actually be an \textit{unbroken} sequence of integers centered at one half \textbf{\cite{Coll typea}}.\footnote{The paper of Coll et al. appears as a folllow-up to the \textit{Lett. in Math. Physics} article 
by Gerstenhaber and Giaquinto \textbf{\cite{G3}}, where they claim that the eigenvalues of the adjoint representation of a semisimple principal element of a Frobenius seaweed subalgebra of $\mathfrak{sl}(n)$ consists of an unbroken sequence of integers. However, M. Dufflo, in a private communication to those authors, indicated that their proof contained an error.} Moreover, the dimensions of the associated eigenspaces are shown to have a symmetric distribution.  

The goal of this paper is to establish the following theorem, which asserts that the above-described spectral phenomena for type A is also exhibited in the classical type-B and type-C cases.

\begin{theorem}\label{thm:main}
If $\mathfrak{g}$ is a Frobenius seaweed subalgebra of type B or type C $($i.e., $\mathfrak{g}$ is a subalgebra of $\mathfrak{so}(2n+1)$ or $\mathfrak{sp}(2n)$, respectively$)$ and $\widehat{F}$ is a principal element of $\mathfrak{g}$, then the spectrum of $\ad \widehat{F}$ consists of an unbroken set of integers centered at one-half.  Moreover, the dimensions of the associated eigenspaces form a symmetric distribution. 
\end{theorem}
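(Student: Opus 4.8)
The plan is to reduce the type-B and type-C cases to the already-established type-A result of \cite{Coll typea} wherever possible, and to handle the genuinely new phenomena — those coming from the short/long root asymmetry and the branching/fork at the end of the Dynkin diagram — by a direct combinatorial analysis of the principal element. First I would set up the standard combinatorial model: a seaweed subalgebra of $B_n$ or $C_n$ is determined by a pair of compositions (of the relevant integer), and one may assume the Frobenius condition holds. In type A, the principal element $\widehat F$ is diagonal and its entries are computed by the ``meander'' (graph-theoretic) description of the seaweed; I would recall that for a Frobenius seaweed the meander is a single path, and that the diagonal entries of $\widehat F$ are determined by walking along this path with alternating increments $\pm 1$. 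The key structural point is that in types B and C the ambient Lie algebra sits inside $\mathfrak{gl}(2n+1)$ (resp. $\mathfrak{gl}(2n)$) with a symmetry forced by the defining bilinear form, so the principal element is still diagonal, now of the form $\mathrm{diag}(a_1,\dots,a_n,c,-a_n,\dots,-a_1)$ (type B) or $\mathrm{diag}(a_1,\dots,a_n,-a_n,\dots,-a_1)$ (type C), with $c$ absent/zero in type C.

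The main steps are then: (1) Show the spectrum of $\ad\widehat F$ on $\mathfrak g$ is $\{a_i-a_j\}\cup\{a_i+a_j\}\cup\{\pm a_i\}\cup\{\pm 2a_i\}$ intersected appropriately with the root system (the exact list depending on B vs.\ C), together with $0$ from the Cartan part; so everything is controlled by the multiset $\{a_i\}$ of ``half-eigenvalues.'' (2) Prove the $a_i$ are in $\tfrac12\Z$ — half-integers or integers — by the meander walk argument, exactly as in type A, being careful about the new vertex/edge types at the folded end of the diagram (the ``$\sigma$-mirror'' structure the macros hint at). (3) Prove unbrokenness: show that if $\lambda$ and $\lambda+2$ are eigenvalues then $\lambda+1$ is too, by producing, for each pair of root spaces realizing the outer values, an intermediate root; here I would use the fact that the meander being a single connected path means the set $\{a_i\}$ is itself an unbroken run of half-integers (or integers) symmetric about an appropriate center, and then close the root-system arithmetic case by case. (4) Prove the symmetric multiplicity distribution: exhibit an involution on $\mathfrak g$ (negation/transpose composed with the diagram automorphism, intertwined with the action of a suitable Weyl group element) that sends the $\lambda$-eigenspace of $\ad\widehat F$ to the $(1-\lambda)$-eigenspace, mirroring the ``centered at one-half'' normalization.

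The hard part will be step (3) together with the bookkeeping in step (2) at the forked end of the $B_n$/$C_n$ diagram: in type A a Frobenius seaweed's meander is a single path and the half-eigenvalue multiset comes out to be an unbroken symmetric run almost for free, but the type-B and type-C meanders carry extra edge types (coming from short roots, and from the fact that indices are identified with their negatives), so I expect one must classify the possible local configurations at the ``center'' of the meander and check in each case that no gap is created and that the symmetry about one-half is preserved. I would organize this as a sequence of lemmas: first a normal-form/reduction lemma pushing a general $B_n$ or $C_n$ seaweed to a manageable family, then a lemma computing $\{a_i\}$ from the meander, then the unbroken-and-symmetric lemma for $\{a_i\}$, and finally the translation from $\{a_i\}$ to the full spectrum of $\ad\widehat F$ via the root-space decomposition. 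A secondary subtlety is the half-integrality itself: in type A the spectrum consists of integers, whereas the theorem here allows the center at one-half, so I would track carefully whether the $a_i$ are integers or half-integers and confirm that the sums/differences landing in the root system always come out to integers, as they must since $\ad\widehat F$ acts on a Lie algebra with integer structure constants relative to a Chevalley basis.
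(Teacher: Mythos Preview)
Your plan is a genuinely different route from the paper's, and its crucial step does not go through as stated. The paper does \emph{not} embed $\mf{g}$ in $\mathfrak{gl}(N)$ and study the diagonal entries $a_i$; instead it works entirely at the level of simple-root values $\alpha_i(\widehat F)$ using Joseph's \emph{orbit meander} (not the Dergachev--Kirillov meander), partitions the spectrum as a multiset union $\bigcup_\sigma \mathcal E(\sigma)$ over the maximally connected components $\sigma$ of $\pi_1$ and $\pi_2$, proves symmetry component-by-component (direct multiplicity count for the B/C component, a ``symmetric root'' pairing $\alpha\leftrightarrow\overline\alpha$ with $\alpha(\widehat F)+\overline\alpha(\widehat F)=1$ for each type-A component), and proves unbrokenness by an induction on the \emph{Winding-up moves} (Block Creation, Rotation Expansion, Pure Expansion) that build every Frobenius B/C orbit meander from the base $\mathcal M_q(1^q\mid\emptyset)$. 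The authors explicitly remark that the type-A meander argument of \cite{Coll typea} does not transfer directly, which is why they switch to Joseph's construction.

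Your step (3) is the gap. The assertion that ``the meander being a single connected path means the set $\{a_i\}$ is itself an unbroken run of half-integers symmetric about an appropriate center'' is neither proved nor true in the form you need: even in type~A the diagonal entries of $\widehat F$ need not form a consecutive run, and in types B and C the seaweed only contains \emph{some} root spaces, so knowing the multiset $\{a_i\}$ is nice does not by itself force $\{a_i-a_j\}\cup\{a_i+a_j\}\cup\cdots$ (restricted to the seaweed) to be unbroken. ``Classify the possible local configurations at the center and check no gap is created'' is not a proof mechanism; the paper's substitute for this is precisely the Winding-up induction, which tracks how each $\mathcal E(\sigma)$ changes under each move and uses the symmetric-root relation to fill any apparent gaps. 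Two smaller points: the $B_n$ and $C_n$ Dynkin diagrams are linear with a double bond, not forked (you may be thinking of $D_n$), so your ``forked end'' bookkeeping is misdirected; and your involution idea for symmetry is plausible but unspecified---the paper instead gets symmetry by an explicit pairing and multiplicity count, which also feeds into the unbrokenness induction.
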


\noindent
\begin{remark}
In the first article of this series \textbf{\cite{Coll typea}}, the type-A unbroken symmetric spectrum result is established using a combinatorial argument based on the graph-theoretic meander construction of Dergachev and Kirillov \textbf{\cite{dk}}.  Here, the combinatorial arguments  heavily leverage the results of \textbf{\cite{Coll typea}}, but the inductions are predicated on the more sophisticated ``orbit meander" construction of Joseph \textbf{\cite{Joseph2}}.

\end{remark}

%%%%%%%%%%%%%%%%%%%%%%%%%%%%%%%%%%%%%%%%%%%%%%%%%%%%%%%%%%%%%%%%%%%%%%
%%%%%%%%%%%%%%%%%%%%%%%%%%%%%%%%%%%%%%%%%%%%%%%%%%%%%%%%%%%%%%%%%%%%%%
\section{Seaweeds}
%%%%%%%%%%%%%%%%%%%%%%%%%%%%%%%%%%%%%%%%%%%%%%%%%%%%%%%%%%%%%%%%%%%%%%
%%%%%%%%%%%%%%%%%%%%%%%%%%%%%%%%%%%%%%%%%%%%%%%%%%%%%%%%%%%%%%%%%%%%%%

Let $\mf{g}$ be a simple Lie algebra equipped with a triangular decomposition 
\begin{eqnarray} \label{triangular}
\mf{g}=\mf{u_+}\oplus\mf{h}\oplus\mf{u_-},
\end{eqnarray}
where $\mf{h}$ is a Cartan subalgebra of $\mf{g}$. Let $\Delta$ be its root system 
where $\Delta_{+}$ are the \textit{positive roots} on $\mf{u_+}$ and 
$\Delta_-$ are the \textit{negative roots} roots
on $\mf{u_-}$, and let $\Pi$ denote the set of \textit{simple roots} of $\mf{g}$. 
Given $\beta\in\Delta$,
let $\mf{g}_{\beta}$ denote its corresponding root space,
and let $x_\beta$ denote the element of weight $\alpha$ in a 
Chevalley basis of $\mf{g}$. Given a subset 
$\pi_1\subseteq \Pi$, let $\mf{p}_{\pi_1}$ denote the parabolic subalgebra of
$\mf{g}$ generated by all $\mf{g}_{\beta}$ such that $-\beta\in\Pi$ or $\beta\in\pi_1$.
Such a parabolic subalgebra is called \textit{standard}  with respect to the Borel subalgebra 
$\mf{u_-}\oplus\mf{h}$, and it is known that every parabolic subalgebra is conjugate to exactly
one standard parabolic subalgebra.

Formation of a seaweed subalgebra of $\mathfrak{g}$ requires two weakly opposite parabolic
subalgebras, i.e., two parabolic subalgebras $\mf{p}$ and $\mf{p'}$ such that 
$\mf{p} + \mf{p'}= \mf{g}$. In this case,  $\mf{p}\cap\mf{p'}$ is called a \textit{seaweed}, or in the nomenclature of Joseph \textbf{\cite{Joseph2}}, a  \textit{biparabolic} subalgebra of $\mf{g}$.  Given a subset $\pi_2\subseteq \Pi$,
let $\mf{p}_{\pi_2}^-$ denote the parabolic subalgebra of
$\mf{g}$ generated by all $\mf{g}_{\beta}$ such that $\beta\in\Pi$ 
or $-\beta\in\pi_2$.
Given two subsets $\pi_1,\pi_2\subseteq\Pi$, we have 
$\mf{p}_{\pi_1}+\mf{p}_{\pi_2}=\mf{g}$.  We now define the seaweed

\begin{eqnarray*}
\mf{p}(\pi_1\dd \pi_2)=\mf{p}_{\pi_1}\cap\mf{p}_{\pi_2}^-,
 \end{eqnarray*}

\noindent
which is said to be \textit{standard} with respect to the triangular decomposition in (\ref{triangular}).
Any seaweed is conjugate to a standard one, so it suffices to work
with standard seaweeds only. Note that an arbitrary seaweed may be conjugate to more than one standard seaweed (see \textbf{\cite{Panyushev1}}). 

We will often assume that $\pi_1\cup\pi_2=\Pi$,
for if not then $\mf{p}(\pi_1\dd \pi_2)$ can be expressed
as a direct sum of seaweeds.
Additionally, we use superscripts and subscripts 
to specify the type and rank of the containing simple Lie algebra 
$\mf{g}$. For example $\mf{p}^{\C}_{n}(\pi_1\dd \pi_2)$ is a 
seaweed subalgebra of $C_n=\mathfrak{sp}(2n)$, the symplectic Lie algebra of rank $n$.

It will be convenient to visualize the simple roots of a seaweed 
by constructing a graph, which we call 
a \textit{split Dynkin diagram}.  
Suppose $\mf{g}$ has rank $n$, and let $\Pi=\{\alpha_n,\dots ,\alpha_1\}$,
where $\alpha_1$ is the exceptional root for types B and C. 
Draw two horizontal lines of $n$ vertices, say $v_n^+,\dots ,v_1^+$ on top
and $v_n^-,\dots ,v_1^-$ on the bottom.
Color $v_i^+$ black if $\alpha_i\in\pi_1$, color $v_i^-$ black if $\alpha_i\in\pi_2$,
and color all other vertices white. Furthermore, if $\alpha_i,\alpha_j\in\pi_1$ are
not orthogonal, connect $v_i^+$ and $v_j^+$ with an edge in the 
standard way used in Dynkin diagrams.
Do the same for bottom vertices according to the roots in $\pi_2$. A \textit{maximally connected component} of a split Dynkin diagram is defined in the obvious way, and such a component is of type B or C if it contains the exceptional root $\alpha_1$; otherwise the component is of type A.  See Figure \ref{sdynkin}.
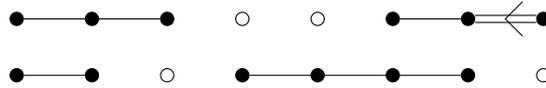
\begin{figure}[H]
\[\begin{tikzpicture}
[decoration={markings,mark=at position 0.6 with 
{\arrow{angle 90}{>}}}]

\draw (1,.75) node[draw,circle,fill=black,minimum size=5pt,inner sep=0pt] (1+) {};
\draw (2,.75) node[draw,circle,fill=black,minimum size=5pt,inner sep=0pt] (2+) {};
\draw (3,.75) node[draw,circle,fill=black,minimum size=5pt,inner sep=0pt] (3+) {};
\draw (4,.75) node[draw,circle,fill=white,minimum size=5pt,inner sep=0pt] (4+) {};
\draw (5,.75) node[draw,circle,fill=white,minimum size=5pt,inner sep=0pt] (5+) {};
\draw (6,.75) node[draw,circle,fill=black,minimum size=5pt,inner sep=0pt] (6+) {};
\draw (7,.75) node[draw,circle,fill=black,minimum size=5pt,inner sep=0pt] (7+) {};
\draw (8,.75) node[draw,circle,fill=black,minimum size=5pt,inner sep=0pt] (8+) {};

\draw (1,0) node[draw,circle,fill=black,minimum size=5pt,inner sep=0pt] (1-) {};
\draw (2,0) node[draw,circle,fill=black,minimum size=5pt,inner sep=0pt] (2-) {};
\draw (3,0) node[draw,circle,fill=white,minimum size=5pt,inner sep=0pt] (3-) {};
\draw (4,0) node[draw,circle,fill=black,minimum size=5pt,inner sep=0pt] (4-) {};
\draw (5,0) node[draw,circle,fill=black,minimum size=5pt,inner sep=0pt] (5-) {};
\draw (6,0) node[draw,circle,fill=black,minimum size=5pt,inner sep=0pt] (6-) {};
\draw (7,0) node[draw,circle,fill=black,minimum size=5pt,inner sep=0pt] (7-) {};
\draw (8,0) node[draw,circle,fill=white,minimum size=5pt,inner sep=0pt] (8-) {};

\draw (1-) to (2-);
\draw (4-) to (7-);
\draw (1+) to (3+);
\draw (6+) to (7+);
\draw [double distance=.8mm,postaction={decorate}] (8+) to (7+);

;\end{tikzpicture}\]
\caption{The split Dynkin diagram of $\mf{p}_8^\C(
\{\alpha_8,\alpha_7,\alpha_6,\alpha_3, \alpha_2, \alpha_1
\dd \alpha_8,\alpha_7,\alpha_5,
\alpha_4,\alpha_3,\alpha_2\})$}
\label{sdynkin}
\end{figure}

%Next we describe Kostant's cascade, from which one can obtain a regular functional on a seaweed. Let $\mathcal{P}(\Pi)$ denote the power  set of $\Pi$. Given any subset $\pi\subseteq\Pi$, we associate to it a set $K(\pi)\subseteq\mathcal{P}(\Pi)$ called the cascade of $\pi$. The procedure is defined inductively as follows. First, $K(\emptyset)=\emptyset$. If $\pi$ is nonempty then let $\sigma_j,\dots ,\sigma_1$ denote the maximally connected components of $\pi$ (is this language self-evident???), and set \[K(\pi)=K(\sigma_j)\cup\dots \cup K(\sigma_1).\]
%Then set \[K(\sigma_i)=\{\sigma_i\}\cup K(T),\]
%where $T=\{\alpha\in\sigma_i\mid \epsilon_{\sigma_i}(h_\alpha)=0\}$,
%and $h_\alpha$ is the unique element of 
%$[\mf{g}_{\alpha},\mf{g}_{-\alpha}]$ such that $\alpha(h_{\alpha})=2$.

%Given a seaweed $\mf{p}(\pi_1\dd\pi_2)$, the functional \[F_{\pi_1,\pi_2}=\sum_{L\in K(\pi_1)}x^*_{-\epsilon_L}
%+\sum_{L\in K(\pi_2)}x^*_{\epsilon_L}\]
%is regular (double check the notation here).
%See Tables 2 and 3 in \cite{BM} for an explicit list of 
%the set of highest roots of a cascade for all simple Lie algebras.

Given a seaweed $\mf{p}(\pi_1\dd\pi_2)$ of a simple 
Lie algbera $\mf{g}$, let $W$ denote the Weyl group of its
root system $\Delta$, generated by the reflections $s_\alpha$ 
such that $\alpha\in\Pi$. For $j=1,2$ define $W_{\pi_j}$ 
to be the subgroup of $W$ generated 
by $s_\alpha$ such that $\alpha\in\pi_j$. Let $w_j$
denote the unique longest (in the usual Coxeter sense) 
element of $W_{\pi_j}$,
and define an action
\[i_j\alpha=
\begin{cases}-w_j\alpha, & \text{ for all }\alpha\in\pi_j;\\
\alpha, & \text{ for all }\alpha\in\Pi\setminus\pi_j.
\end{cases}\]
Note that $i_j$ is an involution. 
For components of types B and C, the longest element $w_j = -id$.

To visualize the action of $i_j$, we append dashed edges
to the split Dynkin diagram of a seaweed. Specifically,
we draw a dashed edge from $v_i^+$ to $v_j^+$
if $i_1\alpha_i=\alpha_j$, and we draw a dashed  
edge from $v_i^-$ to $v_j^-$ if $i_2\alpha_i=\alpha_j$.
For simplicity, we will omit drawing the looped edge in the case
that $i_j\alpha_i=\alpha_i$.
We call the resulting graph the \textit{orbit meander}
of the associated seaweed.
See Figure \ref{fig:orbit meander}.
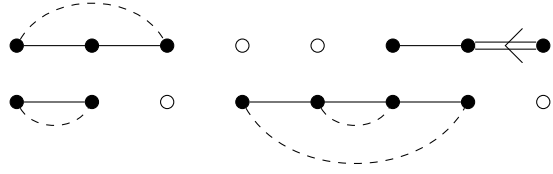
\begin{figure}[H]
\[\begin{tikzpicture}
[decoration={markings,mark=at position 0.6 with 
{\arrow{angle 90}{>}}}]

\draw (1,.75) node[draw,circle,fill=black,minimum size=5pt,inner sep=0pt] (1+) {};
\draw (2,.75) node[draw,circle,fill=black,minimum size=5pt,inner sep=0pt] (2+) {};
\draw (3,.75) node[draw,circle,fill=black,minimum size=5pt,inner sep=0pt] (3+) {};
\draw (4,.75) node[draw,circle,fill=white,minimum size=5pt,inner sep=0pt] (4+) {};
\draw (5,.75) node[draw,circle,fill=white,minimum size=5pt,inner sep=0pt] (5+) {};
\draw (6,.75) node[draw,circle,fill=black,minimum size=5pt,inner sep=0pt] (6+) {};
\draw (7,.75) node[draw,circle,fill=black,minimum size=5pt,inner sep=0pt] (7+) {};
\draw (8,.75) node[draw,circle,fill=black,minimum size=5pt,inner sep=0pt] (8+) {};

\draw (1,0) node[draw,circle,fill=black,minimum size=5pt,inner sep=0pt] (1-) {};
\draw (2,0) node[draw,circle,fill=black,minimum size=5pt,inner sep=0pt] (2-) {};
\draw (3,0) node[draw,circle,fill=white,minimum size=5pt,inner sep=0pt] (3-) {};
\draw (4,0) node[draw,circle,fill=black,minimum size=5pt,inner sep=0pt] (4-) {};
\draw (5,0) node[draw,circle,fill=black,minimum size=5pt,inner sep=0pt] (5-) {};
\draw (6,0) node[draw,circle,fill=black,minimum size=5pt,inner sep=0pt] (6-) {};
\draw (7,0) node[draw,circle,fill=black,minimum size=5pt,inner sep=0pt] (7-) {};
\draw (8,0) node[draw,circle,fill=white,minimum size=5pt,inner sep=0pt] (8-) {};

\draw (1-) to (2-);
\draw (4-) to (7-);
\draw (1+) to (3+);
\draw (6+) to (7+);
\draw [double distance=.8mm,postaction={decorate}] (8+) to (7+);

\draw [dashed] (1+) to [bend left=60] (3+);
\draw [dashed] (1-) to [bend right=60] (2-);
\draw [dashed] (4-) to [bend right=60] (7-);
\draw [dashed] (5-) to [bend right=60] (6-);
%\draw [dotted] (2+) to [loop above] (2+);

;\end{tikzpicture}\]
\caption{The orbit meander of $\mf{p}_8^\C(
\{\alpha_8,\alpha_7,\alpha_6,\alpha_3, \alpha_2, \alpha_1
\dd \alpha_8,\alpha_7,\alpha_5,
\alpha_4,\alpha_3,\alpha_2\})$}
\label{fig:orbit meander}
\end{figure}

It turns out that the index of $\mf{p}(\pi_1\dd\pi_2)$
is governed by the orbits of the cyclic group $<i_1i_2>$
acting on $\Pi$ (see Section 7.16 in \textbf{\cite{Joseph}}).  Since we are interested in only Frobenius seaweeds, we do not require the full power of that theorem, needing only the following corollary. 

%The following is a consequence of this.

\begin{theorem}[Joseph \textbf{\cite{Joseph}}, Section 7.16]\label{thm:frobenius}
Given subsets $\pi_1,\pi_2\subseteq\Pi$ such that $\pi_1\cup\pi_2=\Pi$, 
%let $\pi_{\cap}=\pi_1\cap\pi_2$, and
let $\pi_{\cup}=\Pi\setminus (\pi_1\cap\pi_2)$.
The seaweed $\mf{p}(\pi_1\dd\pi_2)$ is Frobenius if and only if
every $<i_1i_2>$ orbit contains 
exactly one element from $\pi_{\cup}$.
\end{theorem}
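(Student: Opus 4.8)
\medskip
\noindent\textbf{Proof proposal.}\ The plan is to obtain the asserted equivalence as a direct specialization of Joseph's general index formula for biparabolic subalgebras (\cite{Joseph}, Section~7.16); the full strength of that result is recalled above only informally, as the statement that $\ind\mf{p}(\pi_1\dd\pi_2)$ is ``governed by'' the orbits of $\langle i_1i_2\rangle$ on $\Pi$. First I would pin down the precise form: that formula writes the index as a sum of nonnegative integer contributions indexed by the orbits $\mathcal{O}$ of the cyclic group $\langle i_1i_2\rangle$ acting on $\Pi$, namely
\[
\ind\mf{p}(\pi_1\dd\pi_2)\;=\;\#\{\,\mathcal{O}:\mathcal{O}\cap\pi_{\cup}=\emptyset\,\}\;+\sum_{\mathcal{O}:\,\mathcal{O}\cap\pi_{\cup}\neq\emptyset}\bigl(|\mathcal{O}\cap\pi_{\cup}|-1\bigr).
\]
Two elementary observations set the stage. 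Since $i_1$ and $i_2$ are involutions of $\Pi$, the group $\langle i_1i_2\rangle$ is cyclic and its orbits genuinely partition $\Pi$; and since we assume $\pi_1\cup\pi_2=\Pi$, the set $\pi_{\cup}=\Pi\setminus(\pi_1\cap\pi_2)$ is exactly the symmetric difference $(\pi_1\setminus\pi_2)\cup(\pi_2\setminus\pi_1)$, so $\pi_{\cup}\subseteq\Pi$ is partitioned by the orbits and $\sum_{\mathcal{O}}|\mathcal{O}\cap\pi_{\cup}|=|\pi_{\cup}|$.

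With the formula available, the deduction is immediate. By definition $\mf{p}(\pi_1\dd\pi_2)$ is Frobenius precisely when $\ind\mf{p}(\pi_1\dd\pi_2)=0$, and every term in the displayed sum is a nonnegative integer: an orbit disjoint from $\pi_{\cup}$ contributes $1$, while an orbit meeting $\pi_{\cup}$ contributes $|\mathcal{O}\cap\pi_{\cup}|-1\geq 0$. Hence the sum vanishes if and only if each individual contribution vanishes, i.e., if and only if no orbit is disjoint from $\pi_{\cup}$ and every orbit meeting $\pi_{\cup}$ meets it in a single element --- equivalently, if and only if each $\langle i_1i_2\rangle$-orbit in $\Pi$ contains exactly one element of $\pi_{\cup}$. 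This is the stated criterion. (I would also record the bookkeeping consequence that, under this criterion, $|\pi_{\cup}|$ equals the number of $\langle i_1i_2\rangle$-orbits in $\Pi$, as this is used repeatedly in the sequel.)

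The one piece of genuine content here is Joseph's index formula itself, which I would import rather than reprove: a self-contained argument would amount to re-deriving it --- for instance by extending the Dergachev--Kirillov meander count \cite{dk} from type A to types B and C, or through the semi-invariant and canonical-truncation machinery of \cite{Joseph} --- and that lies outside the scope of the present paper. So the ``hard part'' is not in the deduction, which is a one-line appeal to nonnegativity of the summands, but entirely in the quoted theorem; everything else is elementary bookkeeping with the orbit partition. As a consistency check one can verify the displayed formula in low rank against direct computations (the Borel of $\mf{sl}(2)$ is Frobenius, the maximal parabolic of $\mf{sl}(3)$ with block sizes $(2,1)$ is Frobenius, while the Borel of $\mf{sl}(3)$ has index $1$), but no such verification is needed for the argument.
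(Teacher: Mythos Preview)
Your proposal is correct and matches the paper's treatment: the paper does not give its own proof of this theorem but explicitly presents it as a corollary of Joseph's general index formula (\cite{Joseph}, Section~7.16), exactly as you do. The only difference is that you spell out the one-line nonnegativity deduction from the orbit-indexed sum, whereas the paper simply quotes the criterion and cites Joseph.
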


\begin{example} (Frobenius seaweed)
The seaweed from Figure \ref{fig:orbit meander} is Frobenius as the $<i_1i_2>$ orbits of $\mf{p}_8^\C(
\{\alpha_8,\alpha_7,\alpha_6,\alpha_3, \alpha_2, \alpha_1
\dd \alpha_8,\alpha_7,\alpha_5,
\alpha_4,\alpha_3,\alpha_2\})$ are
$\{\alpha_7,\alpha_6,\alpha_8\}$, $\{\alpha_5,\alpha_2\}$, $\{\alpha_4,\alpha_3\}$, $\{\alpha_1\}$, and each orbit contains exactly one element
from $\pi_{\cup}=\{\alpha_6,\alpha_5,\alpha_4,\alpha_1\}$.
\end{example}

\begin{example} 
For an example of a seaweed that is not Frobenius, consider
the orbit meander of $\mf{p}_7^\A(\{\alpha_7,\alpha_6,\alpha_5,
\alpha_4,\alpha_3,\alpha_2
\dd \alpha_7,\alpha_6,\alpha_4,\alpha_3,\alpha_2,\alpha_1\})$ shown in Figure \ref{fig:orbit meander nonfrob} below.
The $<i_1i_2>$ orbits $\{\alpha_7,\alpha_3\}$ and 
$\{\alpha_6,\alpha_2\}$ contain no elements from
$\pi_{\cup} = \{\alpha_5, \alpha_1\}$, and the orbit $\{\alpha_5,\alpha_4,\alpha_1\}$ contains
two elements from $\pi_{\cup}$.

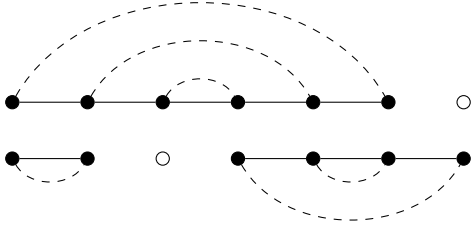
\begin{figure}[H]
\[\begin{tikzpicture}
[decoration={markings,mark=at position 0.6 with 
{\arrow{angle 90}{>}}}]

\draw (1,.75) node[draw,circle,fill=black,minimum size=5pt,inner sep=0pt] (1+) {};
\draw (2,.75) node[draw,circle,fill=black,minimum size=5pt,inner sep=0pt] (2+) {};
\draw (3,.75) node[draw,circle,fill=black,minimum size=5pt,inner sep=0pt] (3+) {};
\draw (4,.75) node[draw,circle,fill=black,minimum size=5pt,inner sep=0pt] (4+) {};
\draw (5,.75) node[draw,circle,fill=black,minimum size=5pt,inner sep=0pt] (5+) {};
\draw (6,.75) node[draw,circle,fill=black,minimum size=5pt,inner sep=0pt] (6+) {};
\draw (7,.75) node[draw,circle,fill=white,minimum size=5pt,inner sep=0pt] (7+) {};
%\draw (8,.75) node[draw,circle,fill=white,minimum size=5pt,inner sep=0pt] (8+) {};

\draw (1,0) node[draw,circle,fill=black,minimum size=5pt,inner sep=0pt] (1-) {};
\draw (2,0) node[draw,circle,fill=black,minimum size=5pt,inner sep=0pt] (2-) {};
\draw (3,0) node[draw,circle,fill=white,minimum size=5pt,inner sep=0pt] (3-) {};
\draw (4,0) node[draw,circle,fill=black,minimum size=5pt,inner sep=0pt] (4-) {};
\draw (5,0) node[draw,circle,fill=black,minimum size=5pt,inner sep=0pt] (5-) {};
\draw (6,0) node[draw,circle,fill=black,minimum size=5pt,inner sep=0pt] (6-) {};
\draw (7,0) node[draw,circle,fill=black,minimum size=5pt,inner sep=0pt] (7-) {};
%\draw (8,0) node[draw,circle,fill=black,minimum size=5pt,inner sep=0pt] (8-) {};

\draw (1+) to (2+);
\draw (2+) to (3+);
\draw (3+) to (4+);
\draw (4+) to (5+);
\draw (5+) to (6+);
\draw (1-) to (2-);
\draw (4-) to (5-);
\draw (5-) to (6-);
\draw (6-) to (7-);

\draw [dashed] (1+) to [bend left=60] (6+);
\draw [dashed] (2+) to [bend left=60] (5+);
\draw [dashed] (3+) to [bend left=60] (4+);
\draw [dashed] (1-) to [bend right=60] (2-);
\draw [dashed] (4-) to [bend right=60] (7-);
\draw [dashed] (5-) to [bend right=60] (6-);

;\end{tikzpicture}\]
\caption{The orbit meander of $\mf{p}_7^\A(\{\alpha_7,\alpha_6,\alpha_5,
\alpha_4,\alpha_3,\alpha_2
\dd \alpha_7,\alpha_6,\alpha_4,\alpha_3,\alpha_2,\alpha_1\})$}
\label{fig:orbit meander nonfrob}
\end{figure}

\end{example}

%%%%%%%%%%%%%%%%%%%%%%%%%%%%%%%%%%%%%%%%%%%%%%%%%%%%%%%%%%%%%%%%%%%%%%
%%%%%%%%%%%%%%%%%%%%%%%%%%%%%%%%%%%%%%%%%%%%%%%%%%%%%%%%%%%%%%%%%%%%%%
\section{Principal Elements}
%%%%%%%%%%%%%%%%%%%%%%%%%%%%%%%%%%%%%%%%%%%%%%%%%%%%%%%%%%%%%%%%%%%%%%
%%%%%%%%%%%%%%%%%%%%%%%%%%%%%%%%%%%%%%%%%%%%%%%%%%%%%%%%%%%%%%%%%%%%%%

Given a Frobenius seaweed with Frobenius functional
$F$ and corresponding principal element $\widehat{F}$, the eigenvalues of
$\ad \widehat{F}$ are independent of which Frobenius functional is chosen \textbf{\cite{Ooms2}}.
We call these eigenvalues the \textit{spectrum} of the seaweed.
In this section, we describe an algorithm
for computing them.

Let $\mf{p}(\pi_1\dd\pi_2)$ be Frobenius and $F_{\pi_1,\pi_2}$ be an associated Frobenius functional with principal element $ \widehat{F}_{\pi_1,\pi_2}$.  (We will simply write $F$ and $\widehat{F}$ when the seaweed is understood.) 
Let $\sigma$ be a maximally connected component of either
$\pi_1$ or $\pi_2$, and for convenience let 
$\sigma=\{\alpha_k,\alpha_{k-1},\dots ,\alpha_1\}$ where $\alpha_1$ is the
exceptional root if $\sigma$ is of type B or C.

%\noindent
%\textit{Remark}:
%If $A$ is a subset of vertices in an orbit meander, then we take $\sigma = A$ to mean the simple roots corresponding to $A$.

Each eigenvalue of $\ad \widehat{F}$ can be expressed as a linear combination
of elements $\alpha_i(\widehat{F})$ where $\alpha_i$ is a simple root. We call such numbers \textit{simple eigenvalues}.
In many cases, the simple eigenvalues are determined.

\begin{lemma}[Joseph \textbf{\cite{Joseph}}, Section 5]
\label{eigenvalue table}
In Table \ref{tab:simple eigenvalue} below, the given value is 
$\alpha_i(\widehat{F})$ if $\sigma$ is a maximally connected component 
of $\pi_1$, and it is $-\alpha_i(\widehat{F})$ if 
$\sigma$ is a maximally connected component of $\pi_2$.
In either case it is assumed that $\alpha_i\in\sigma$.
\end{lemma}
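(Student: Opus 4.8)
The plan is to fill in Table~\ref{tab:simple eigenvalue} explicitly, so that Lemma~\ref{eigenvalue table} is really a statement that for each type of maximally connected component $\sigma$ one can write down $\alpha_i(\widehat F)$ (up to the sign governed by whether $\sigma \subseteq \pi_1$ or $\sigma \subseteq \pi_2$). The natural strategy is to localize: since $\widehat F$ is characterized by $F\circ\ad\widehat F = F$, and $F$ restricted to a connected component behaves like a Frobenius functional on the corresponding (reductive) subalgebra generated by that component, one computes $\alpha_i(\widehat F)$ by diagonalizing there. Concretely, I would write $\widehat F = \sum_j c_j h_{\alpha_j} + (\text{root part})$, observe that the root part contributes nothing to $\alpha_i(\widehat F)$ since $\alpha_i$ vanishes on root vectors, and reduce to solving the linear system imposed by $F([\widehat F, x_\beta]) = F(x_\beta)$ for each $\beta$ whose root space appears in the nilradical-type piece attached to $\sigma$.

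The key steps, in order, would be: (1) Recall from Joseph's orbit-meander setup that along a type-A component $\sigma = \{\alpha_k,\dots,\alpha_1\}$ (here with no exceptional root) the relevant Frobenius functional pairs consecutive root vectors, forcing the telescoping relations $\alpha_{i}(\widehat F) + \alpha_{i+1}(\widehat F) + \cdots$ to equal prescribed constants; solving these gives the alternating/linear pattern of values characteristic of the type-A case already treated in \textbf{\cite{Coll typea}}. (2) For a type-B or type-C component, the exceptional root $\alpha_1$ is present and $w_j = -\mathrm{id}$, so the involution $i_j$ fixes the whole component; this changes the boundary condition of the linear recursion at the $\alpha_1$ end. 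I would set up the short root / long root bookkeeping (the double edge $8^+ \to 7^+$ in Figure~\ref{sdynkin} is the model) and solve the resulting system, which produces the half-integer or integer values recorded in the table. (3) Verify the claimed sign flip: replacing $\pi_1$ by $\pi_2$ corresponds to the Chevalley involution sending $x_\beta \mapsto x_{-\beta}$, $h \mapsto -h$, hence $\alpha_i(\widehat F) \mapsto -\alpha_i(\widehat F)$, which is exactly the asserted dichotomy.

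I expect the main obstacle to be step (2): the case analysis at the exceptional root. Unlike type A, where every edge of the Dynkin diagram is "the same," in types B and C the last bond is a double bond, so the Cartan integers $\langle \alpha_i, \alpha_{i+1}^\vee\rangle$ are no longer all $-1$ near $\alpha_1$, and the coroot $h_{\alpha_1}$ enters the pairing $F([\widehat F, x_{\alpha_1}])$ with a coefficient $2$. Getting the normalization right — i.e., making sure the "centered at one-half" phenomenon survives this asymmetry and that the recursion closes up to give a clean closed form rather than a case-dependent mess — is where the real work lies; one has to be careful that the Kostant-cascade/orbit-meander choice of $F$ is compatible across the B/C component and the A components it may be glued to. The remaining pieces (steps (1) and (3)) are essentially bookkeeping, drawing on the type-A computations of \textbf{\cite{Coll typea}} and the standard properties of the Chevalley involution.
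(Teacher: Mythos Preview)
The paper does not prove this lemma at all --- it is stated with the attribution ``Joseph \textbf{\cite{Joseph}}, Section 5'' and then used without argument; there is no in-paper proof to compare your proposal against.

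That said, as a standalone attempt your outline is a reasonable strategy for reproving Joseph's result, but it is only a plan, not a proof: none of the linear systems coming from $F\circ\ad\widehat F = F$ are actually written down or solved, and you yourself flag the type-B/C boundary case (step (2)) as where the real work lies without doing it. One concrete slip: you anticipate ``half-integer or integer values'' emerging from the type-B/C recursion, but Table~\ref{tab:simple eigenvalue} contains only the integers $-1$, $0$, and $1$; the ``centered at one-half'' phenomenon is a statement about the symmetry of the full spectrum, not about the individual simple eigenvalues $\alpha_i(\widehat F)$. If you intend to supply a self-contained proof rather than rely on the citation, you would need to actually carry out the Cartan-matrix computations at the double bond and verify the closed forms line by line.
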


\begin{table}[H]
\[\begin{tabular}{|l|l|l|}
\hline
Type & $\pm\alpha_i(\widehat{F})$ & $\pm\alpha_i(\widehat{F})$ \\
\hline
\hline
$A_k:k\geq 1$ & 
1, if $i_j\alpha_i=\alpha_i$
%1, if $\sigma\subseteq\pi_1$ and $i_1\alpha_i=\alpha_i$ 
%& 1, if $\sigma\subseteq\pi_2$ and $i_2\alpha_i=\alpha_i$
& \\
\hline
$B_{2k-1}:k\geq 2$ & $(-1)^{i-1}$, if $1\leq i\leq 2k-1$ & \\
\hline
$B_{2k}:k\geq 2$ & $(-1)^{i}$, if $2\leq i\leq 2k$ & 0, if $i=1$\\
%$B_{2k}:k\geq 2$ & $(-1)^{i-1}$, if $1\leq i\leq 2k-1$ & 0, if $i=2k$\\
\hline
$C_{k}:k\geq 2$ & $0$, if $2\leq i\leq k$ & 1, if $i=1$\\
%$C_{k}:k\geq 2$ & $0$, if $1\leq i\leq k-1$ & 1, if $i=k$\\
\hline
\end{tabular}\]
\caption{Values of $\pm\alpha_i(\widehat{F})$}
\label{tab:simple eigenvalue}
\end{table}

For the cases not covered by Table 1, that is, for maximally connected components of type $A_k$ with $k \geq 2$ and $i_j\alpha_i \neq \alpha_i$, the following lemma 
(which includes a corrected typo from \textbf{\cite{Joseph}})
can be applied.

\begin{lemma}[Joseph \textbf{\cite{Joseph}}, Section 5]
\label{eigenvalue orbit}
For each equation below, $\sigma$ is assumed to be a maximally 
connected component of $\pi_1$.
If $\sigma$ is a maximally connected component of $\pi_2$,
then replace $\alpha_i\mapsto -\alpha_i$ and $i_1\mapsto i_2$.
%and $i_1\alpha_i\mapsto -i_2\alpha_i$.

Let $\sigma$ be of type $A$, then
\[\alpha_i(\widehat{F})+i_1\alpha_i(\widehat{F})=
\begin{dcases}
1, & \text{ if }\sigma \text{ is of type }A_k\text{ and }
(\alpha_i,i_1\alpha_i)<0; \\
0, & \text{ if }\sigma \text{ is of type }A_k\text{ and }
(\alpha_i,i_1\alpha_i)=0.
\end{dcases}\]
\end{lemma}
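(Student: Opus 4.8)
The formula records part of Joseph's explicit computation of the principal element, so the plan is to unpack that computation. First I would reduce to a statement inside $\mathfrak{h}$: by Ooms \cite{Ooms2} the spectrum of $\ad\widehat{F}$ is independent of the Frobenius functional, and its semisimple part --- which alone governs that spectrum --- may be conjugated into $\mathfrak{h}$, so assume $\widehat{F}\in\mathfrak{h}$. Expanding a Frobenius functional $F$ in the basis dual to a Chevalley basis, the defining identity $F\circ\ad\widehat{F}=F$ is equivalent to the assertion that $\gamma(\widehat{F})=1$ for every root $\gamma$ whose root vector appears in $F$ with nonzero coefficient. The sign in this normalization --- the value $+1$ rather than $-1$ --- is exactly what pins down the constants on the right-hand side, and is where the typo corrected from \cite{Joseph} lives.

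Next I would localize the support of $F$ inside $\sigma$. Order the simple roots of $\sigma\cong A_k$ along the diagram as $\gamma_1,\dots,\gamma_k$; since $-w_0$, for $w_0$ the longest element of the $A_k$ Weyl group, acts on simple roots by the order-two diagram automorphism, the involution restricts to the flip $i_1\gamma_t=\gamma_{k+1-t}$. The Kostant cascade of $\sigma$ is the descending chain of roots $\beta_t=\gamma_t+\gamma_{t+1}+\cdots+\gamma_{k+1-t}$ for $1\le t\le\lceil k/2\rceil$. The heart of the argument --- and the step I expect to be the real obstacle --- is to show that $\beta_t(\widehat{F})=1$ for \emph{every} cascade root of $\sigma$; this is where one must run the cascade / orbit-meander construction of Joseph \cite{Joseph}, leaning on the type-$A$ analysis of \cite{Coll typea}, to verify that --- whatever the blocks of $\pi_2$ and the cut $\pi_{\cup}=\Pi\setminus(\pi_1\cap\pi_2)$ do across $\sigma$ --- the full $\sigma$-cascade still contributes the value $1$; the Frobenius hypothesis of Theorem \ref{thm:frobenius} (each $\langle i_1i_2\rangle$-orbit meets $\pi_{\cup}$ exactly once) is precisely what makes that bookkeeping close up.

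Granting $\beta_t(\widehat{F})=1$ for all $t$, what is left is a telescoping identity. If $\alpha_i=\gamma_t$ with $\alpha_i\ne i_1\alpha_i$ then $t\le\lfloor k/2\rfloor$, and since $\beta_t-\beta_{t+1}$ picks out the two endpoint summands of $\beta_t$ we get $\gamma_t+i_1\gamma_t=\gamma_t+\gamma_{k+1-t}=\beta_t-\beta_{t+1}$, where $\beta_{t+1}$ is a genuine cascade root unless $t=k/2$ with $k$ even, in which case $\beta_{t+1}$ is the empty sum, namely $0$. Hence
\[
\alpha_i(\widehat{F})+i_1\alpha_i(\widehat{F})=\beta_t(\widehat{F})-\beta_{t+1}(\widehat{F}),
\]
which is $1-1=0$ unless $t=k/2$ with $k$ even, where it is $1-0=1$. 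The exceptional case occurs exactly when $\{\gamma_t,\gamma_{k+1-t}\}$ is the central adjacent pair of $\sigma$, i.e. when $(\alpha_i,i_1\alpha_i)<0$; in all other cases $\gamma_t$ and $\gamma_{k+1-t}$ lie at diagram-distance at least $2$, so $(\alpha_i,i_1\alpha_i)=0$ and the value is $0$, as claimed. Finally, the statement for a component of $\pi_2$ follows by applying the Chevalley involution $\omega$ of $\mathfrak{g}$, which realizes an isomorphism $\mathfrak{p}(\pi_1\dd\pi_2)\cong\mathfrak{p}(\pi_2\dd\pi_1)$, negates $\mathfrak{h}$ and hence $\widehat{F}$, and interchanges the roles of $\pi_1$ and $\pi_2$ (so $i_1\leftrightarrow i_2$) --- exactly the substitution $\alpha_i\mapsto-\alpha_i$, $i_1\mapsto i_2$ in the statement.
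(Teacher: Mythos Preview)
The paper does not give its own proof of this lemma: it is stated with attribution to Joseph \cite{Joseph}, Section 5, and is used as a black box throughout. So there is no in-paper argument to compare against.

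That said, your sketch is a faithful reconstruction of Joseph's approach. The reduction to $\widehat{F}\in\mathfrak{h}$, the identification of $i_1$ on a type-$A_k$ component with the diagram flip $\gamma_t\mapsto\gamma_{k+1-t}$, and the telescoping identity $\gamma_t+\gamma_{k+1-t}=\beta_t-\beta_{t+1}$ for the cascade roots $\beta_t=\gamma_t+\cdots+\gamma_{k+1-t}$ are all correct, and they do yield the dichotomy $0$ versus $1$ exactly as you describe. You are also right that the substantive content sits in the claim $\beta_t(\widehat{F})=1$ for every cascade root of $\sigma$: this is where Joseph's construction of the adapted pair (and the specific Frobenius functional supported on cascade roots) does the work, and you correctly flag it as the step that requires the Frobenius hypothesis of Theorem~\ref{thm:frobenius} rather than being a formal identity. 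Your final remark about the Chevalley involution handling the $\pi_2$ case is likewise the standard reduction.
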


Notice $\sigma$ contains $\alpha_i$ with $(\alpha_i,i_1\alpha_i)<0$ only when $\sigma$ is of type $A_{2k}$ with $k \geq 1$.  

\begin{example} Using Table \ref{tab:simple eigenvalue} and applying Lemma \ref{eigenvalue orbit}, we compute the simple eigenvalues of the seaweed $\mf{p}_8^\C(
\{\alpha_8,\alpha_7,\alpha_6,\alpha_3, \alpha_2, \alpha_1
\dd \alpha_8,\alpha_7,\alpha_5,
\alpha_4,\alpha_3,\alpha_2\})$. See Figure \ref{fig:simple eigenvalues} where the simple eigenvalues are noted above or below the appropriate vertex in the orbit meander for this seaweed.

\begin{figure}[H]
\[\begin{tikzpicture}
[decoration={markings,mark=at position 0.6 with 
{\arrow{angle 90}{>}}}]

\draw (1,.75) node[draw,circle,fill=black,minimum size=5pt,inner sep=0pt] (1+) {};
\draw (2,.75) node[draw,circle,fill=black,minimum size=5pt,inner sep=0pt] (2+) {};
\draw (3,.75) node[draw,circle,fill=black,minimum size=5pt,inner sep=0pt] (3+) {};
\draw (4,.75) node[draw,circle,fill=white,minimum size=5pt,inner sep=0pt] (4+) {};
\draw (5,.75) node[draw,circle,fill=white,minimum size=5pt,inner sep=0pt] (5+) {};
\draw (6,.75) node[draw,circle,fill=black,minimum size=5pt,inner sep=0pt] (6+) {};
\draw (7,.75) node[draw,circle,fill=black,minimum size=5pt,inner sep=0pt] (7+) {};
\draw (8,.75) node[draw,circle,fill=black,minimum size=5pt,inner sep=0pt] (8+) {};

\draw (1,0) node[draw,circle,fill=black,minimum size=5pt,inner sep=0pt] (1-) {};
\draw (2,0) node[draw,circle,fill=black,minimum size=5pt,inner sep=0pt] (2-) {};
\draw (3,0) node[draw,circle,fill=white,minimum size=5pt,inner sep=0pt] (3-) {};
\draw (4,0) node[draw,circle,fill=black,minimum size=5pt,inner sep=0pt] (4-) {};
\draw (5,0) node[draw,circle,fill=black,minimum size=5pt,inner sep=0pt] (5-) {};
\draw (6,0) node[draw,circle,fill=black,minimum size=5pt,inner sep=0pt] (6-) {};
\draw (7,0) node[draw,circle,fill=black,minimum size=5pt,inner sep=0pt] (7-) {};
\draw (8,0) node[draw,circle,fill=white,minimum size=5pt,inner sep=0pt] (8-) {};

\node at (8,1.1) {{$1$}};
\node at (7,1.1) {{$0$}};
\node at (6,1.1) {{$0$}};
\node at (7.1,-.3) {{$0$}};
\node at (6.1,-.3) {{$0$}};
\node at (4.9,-.3) {{$1$}};
\node at (3.9,-.3) {{$0$}};
\node at (2.1,-.3) {{$-1$}};
\node at (.9,-.3) {{$2$}};
\node at (1,1.1) {{$-2$}};
\node at (2,1.1) {{$1$}};
\node at (3,1.1) {{$2$}};

\draw (1-) to (2-);
\draw (4-) to (7-);
\draw (1+) to (3+);
\draw (6+) to (7+);
\draw [double distance=.8mm,postaction={decorate}] (8+) to (7+);

\draw [dashed] (1+) to [bend left=60] (3+);
\draw [dashed] (1-) to [bend right=60] (2-);
\draw [dashed] (4-) to [bend right=60] (7-);
\draw [dashed] (5-) to [bend right=60] (6-);
%\draw [dotted] (2+) to [loop above] (2+);

;\end{tikzpicture}\]
\caption{The simple eigenvalues of $\mf{p}_8^\C(
\{\alpha_8,\alpha_7,\alpha_6,\alpha_3, \alpha_2, \alpha_1
\dd \alpha_8,\alpha_7,\alpha_5,
\alpha_4,\alpha_3,\alpha_2\})$}
\label{fig:simple eigenvalues}
\end{figure}
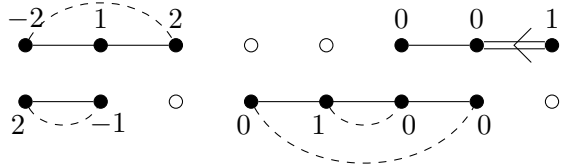

\end{example}

We have the following corollary of Lemma \ref{eigenvalue orbit} that will be used to prove symmetry and the unbroken property.  

\begin{theorem}\label{typeAsymmetricpositiveroot}
If $\sigma$ is a maximally connected component of type $A_k$, then $\sum_{i=1}^k \alpha_i (\widehat{F}) = 1$.  
\end{theorem}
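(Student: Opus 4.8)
The plan is to prove the identity $\sum_{i=1}^k \alpha_i(\widehat F) = 1$ by summing the relations of Lemma \ref{eigenvalue orbit} over a set of representatives for the $i_1$-orbits on $\sigma$. Since $\sigma$ is of type $A_k$, the involution $i_1$ restricted to $\sigma$ is the nontrivial diagram automorphism of $A_k$, namely $\alpha_i \mapsto \alpha_{k+1-i}$ (reindexing $\sigma = \{\alpha_k,\dots,\alpha_1\}$ so that this is the order-reversal). This $i_1$ pairs $\alpha_i$ with $\alpha_{k+1-i}$; a pair $\{\alpha_i, \alpha_{k+1-i}\}$ with $i \neq k+1-i$ satisfies $(\alpha_i, i_1\alpha_i) < 0$ exactly when the two roots are adjacent in the Dynkin diagram, i.e. when $k+1-i = i \pm 1$, which forces $k$ even and $i \in \{k/2, k/2+1\}$; otherwise $(\alpha_i, i_1\alpha_i) = 0$. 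The unique fixed point (when $k$ is odd, $i = (k+1)/2$) contributes $\alpha_i(\widehat F)$ with the value supplied by the $A_k$ row of Table \ref{tab:simple eigenvalue}, namely $1$.

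First I would split into the two parity cases. If $k$ is odd, group the sum as the central fixed term plus the $(k-1)/2$ off-diagonal pairs $\{\alpha_i, \alpha_{k+1-i}\}$ for $1 \le i < (k+1)/2$. None of these pairs are adjacent in $A_k$ (adjacency would require $k$ even), so each contributes $\alpha_i(\widehat F) + i_1\alpha_i(\widehat F) = 0$ by the second case of Lemma \ref{eigenvalue orbit}; the fixed term contributes $1$. Hence the total is $1$. If $k$ is even, there is no fixed point, and exactly one pair — the central pair $\{\alpha_{k/2}, \alpha_{k/2+1}\}$ — is adjacent and contributes $1$ via the first case of Lemma \ref{eigenvalue orbit}, while the remaining $(k-2)/2$ non-adjacent pairs each contribute $0$. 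Again the total is $1$.

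A subtle point I would need to pin down carefully is that the diagram automorphism description of $i_1\big|_\sigma$ is correct: $i_1\alpha = -w_1\alpha$ where $w_1$ is the longest element of $W_{\pi_1}$, and on a type-$A_k$ component the longest element acts as $-(\text{order reversal})$, so $i_1$ is precisely order-reversal on $\sigma$. (For a type-$A_1$ component this is the identity, consistent with the table value $1 = \sum_{i=1}^1 \alpha_i(\widehat F)$.) I should also note that Lemma \ref{eigenvalue orbit} is stated for $\sigma$ a component of $\pi_1$; the case of $\pi_2$ is handled by the stated substitution $\alpha_i \mapsto -\alpha_i$, $i_1 \mapsto i_2$, which sends the claimed identity to $\sum (-\alpha_i(\widehat F)) = 1$ — but since the quantity "$\pm\alpha_i(\widehat F)$" in Lemma \ref{eigenvalue table} and Lemma \ref{eigenvalue orbit} already absorbs that sign, the statement of Theorem \ref{typeAsymmetricpositiveroot} is about the intrinsic component and holds uniformly; I would add one sentence making this normalization explicit.

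I expect the main obstacle to be purely bookkeeping: correctly matching the indexing convention of $\sigma = \{\alpha_k,\dots,\alpha_1\}$ against the "$(-1)^{i-1}$"-style formulas and the order-reversing involution, and verifying the adjacency count in the even case ($i_1\alpha_i = \alpha_{i\pm 1}$ occurs for exactly one representative pair). There is no deep difficulty — the result is a direct corollary of Lemma \ref{eigenvalue orbit} once the combinatorics of the $A_k$ diagram automorphism are laid out.
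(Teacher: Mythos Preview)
Your proposal is correct and takes essentially the same approach as the paper: split on the parity of $k$, pair $\alpha_i$ with $i_1\alpha_i=\alpha_{k+1-i}$, use the orthogonal case of Lemma~\ref{eigenvalue orbit} to make the non-central pairs vanish, and pick up the single $1$ from either the fixed middle root (odd $k$, via Table~\ref{tab:simple eigenvalue}) or the unique adjacent central pair (even $k$, via the $(\alpha_i,i_1\alpha_i)<0$ case of Lemma~\ref{eigenvalue orbit}). The paper's proof is just the terse two-line version of exactly this argument; your write-up is more explicit about which lemma supplies which relation, and your side remark on the $\pi_2$ sign convention is a reasonable clarification, though the paper simply leaves the statement in the $\pi_1$ normalization.
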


\begin{proof}
If $k$ is odd, then $\alpha_i (\widehat{F}) = -\alpha_{k+1-i} (\widehat{F})$ for $i < \frac{k+1}{2}$, and $\alpha_{\frac{k+1}{2}} (\widehat{F}) = 1$.  
If $k$ is even, then $\alpha_i (\widehat{F}) = -\alpha_{k+1-i} (\widehat{F})$ for $i < \frac{k}{2}$, and $\alpha_{\frac{k}{2}} (\widehat{F}) + \alpha_{\frac{k}{2}+1} (\widehat{F}) = 1$.  
\end{proof}

If will be convenient to use the following notation.  
Let $\sigma$ be a maximally connected component of $\pi_1$, and let $\sigma=\{\alpha_k, \alpha_{k-1}, ..., \alpha_1\}$ be of type $A_k$.  The positive roots of $\sigma$ are of the form 
\[\alpha_j+\alpha_{j-1}+\dots +\alpha_i,\]
where $k\geq j\geq i\geq 1$.  
If $\alpha$ is a positive root with $j+i \neq k+1$, call $\overline{\alpha}$ its \textit{symmetric (positive) root}, where

\[\overline{\alpha}=
\begin{dcases}
\alpha_{i-1}+\alpha_{i-2} + \dots + \alpha_{k+1-j}, & \text{ if }k \geq j > i \geq \lc\frac{k}{2}\rc \geq 1; \\
\alpha_{i-1}+\alpha_{i-2} + \dots + \alpha_{k+1-j}, & \text{ if } \left|j - \lc\frac{k}{2}\rc\right| > \left|i - \lc\frac{k}{2}\rc\right|; \\
\alpha_{j+1}+ \alpha_{j+2} + \dots + \alpha_{k+1-i}, & \text{ if } \left|j - \lc\frac{k}{2}\rc\right| < \left|i - \lc\frac{k}{2}\rc\right|; \\
\alpha_{j+1}+ \alpha_{j+2} + \dots + \alpha_{k+1-i}, & \text{ if }k \geq \lc\frac{k}{2}\rc \geq j > i \geq 1. 
\end{dcases}\]

\noindent 
As a corollary of Theorem \ref{typeAsymmetricpositiveroot}, the symmetric roots $\alpha$ and $\overline{\alpha}$ satisfy the following relation:
\begin{eqnarray}\label{SymmPos}
\alpha(\widehat{F})+\overline{\alpha}(\widehat{F})=1
\end{eqnarray}
\noindent 
Symmetric roots satisfy the relation (\ref{SymmPos}), and since $\alpha(\widehat{F})=1$ when $j+i=k+1$, we say the positive root $\alpha$ has no symmetric root.  
We call $\overline{\alpha}(\widehat{F})$ the \textit{symmetric eigenvalue} of $\alpha(\widehat{F})$ or that $\overline{\alpha}(\widehat{F})$ is an eigenvalue \textit{symmetric to} $\alpha(\widehat{F})$.  

\begin{example}
Referring once again to Figure \ref{fig:simple eigenvalues}, consider the type-A maximally connected bottom component $\sigma = \{\alpha_5, \alpha_4, \alpha_3, \alpha_2\}$.  The root symmetric to $\alpha = \alpha_5+\alpha_4+\alpha_3$ is $\overline{\alpha} = \alpha_2$, and the root symmetric to $\alpha = \alpha_5+\alpha_4$ is $\overline{\alpha} = \alpha_3+\alpha_2$.  The root $\alpha = \alpha_5 + \alpha_4 + \alpha_3 + \alpha_2$ has no symmetric root.

%Similarly, if $\sigma = \{\alpha_6, \alpha_5, \alpha_4, \alpha_3, \alpha_2, \alpha_1\}$ is of type A, the root symmetric to $\alpha = \alpha_3 + \alpha_2+\alpha_1$ is $\overline{\alpha} = \alpha_6 + \alpha_5 + \alpha_4$, and the root $\alpha = \alpha_5 + \alpha_4 + \alpha_3 + \alpha_2$ has no symmetric root.  
\end{example}

%\begin{figure}[H]
%\[\begin{tikzpicture}
%[decoration={markings,mark=at position 0.6 with 
%{\arrow{angle 90}{>}}}]

%\draw (1,.75) node[draw,circle,fill=black,minimum size=5pt,inner sep=0pt] (1+) {};
%\draw (2,.75) node[draw,circle,fill=blue,minimum size=5pt,inner sep=0pt] (2+) {};
%\draw (3,.75) node[draw,circle,fill=red,minimum size=5pt,inner sep=0pt] (3+) {};
%\draw (4,.75) node[draw,circle,fill=red,minimum size=5pt,inner sep=0pt] (4+) {};
%\draw (5,.75) node[draw,circle,fill=black,minimum size=5pt,inner sep=0pt] (5+) {};

%\node at (1,.25) {{$\alpha_5$}};
%\node at (2,.25) {{$\alpha_4$}};
%\node at (3,.25) {{$\alpha_3$}};
%\node at (4,.25) {{$\alpha_2$}};
%\node at (5,.25) {{$\alpha_1$}};

%\draw [dashed] (1+) to [bend left=60] (5+);
%\draw [dashed] (2+) to [bend left=60] (4+);

%\draw (1+) to (5+);

%;\end{tikzpicture}\]
%\caption{The positive root $\alpha = \alpha_4$ has symmetric positive root $\overline{\alpha} = \alpha_3 + \alpha_2$}
%\label{SymmetricPositiveRoot}
%\end{figure}

%%%%%%%%%%%%%%%%%%%%%%%%%%%%%%%%%%%%%%%%%%%%%%%%%%%%%%%%%%%%%%%%%%%%%%
%%%%%%%%%%%%%%%%%%%%%%%%%%%%%%%%%%%%%%%%%%%%%%%%%%%%%%%%%%%%%%%%%%%%%%
\section{Proof of Theorem \ref{thm:main} }
%%%%%%%%%%%%%%%%%%%%%%%%%%%%%%%%%%%%%%%%%%%%%%%%%%%%%%%%%%%%%%%%%%%%%%
%%%%%%%%%%%%%%%%%%%%%%%%%%%%%%%%%%%%%%%%%%%%%%%%%%%%%%%%%%%%%%%%%%%%%%

%Let $\mf{p}(\pi_1\dd \pi_2)$ be a Frobenius seaweed.
%For each maximally connected component $\sigma$ of $\pi_1$ or $\pi_2$,
%$S(\sigma)$ is an unbroken multiset of integers centered at one half.
%Moreover, the dimensions of the associated eigenspaces form a 
%symmetric distribution about one half. 
%\end{theorem}

%%%%%%%%%%%%%%%%%%%%%%%%%%%%%%%%%%%%%%%%%%%%%%%%%%%%%%%%%%%%%%%%%%%%%%
%%%%%%%%%%%%%%%%%%%%%%%%%%%%%%%%%%%%%%%%%%%%%%%%%%%%%%%%%%%%%%%%%%%%%%
\subsection{Symmetry}
%%%%%%%%%%%%%%%%%%%%%%%%%%%%%%%%%%%%%%%%%%%%%%%%%%%%%%%%%%%%%%%%%%%%%%
%%%%%%%%%%%%%%%%%%%%%%%%%%%%%%%%%%%%%%%%%%%%%%%%%%%%%%%%%%%%%%%%%%%%%%

We will partition the multiset of eigenvalues according to the
maximally connected components $\sigma$ of $\pi_1$ and $\pi_2$. 
We will prove that each member of this partition is 
symmetric and unbroken.

If $\sigma$ is a Type-A maximally connected
component of $\pi_1$, then we say the multiset of eigenvalues 
from $\sigma$ is
\begin{eqnarray}\label{S1}
\mathcal{E}(\sigma)=\{\beta(\widehat{F})\mid\beta\in\mathbb{N}\sigma\cap\Delta_+\}
\cup\{0^{\ceil{|\sigma|/2}}\},
\end{eqnarray}
and if $\sigma$ is not of Type A then
\begin{eqnarray}\label{S2}
\mathcal{E}(\sigma)=\{\beta(\widehat{F})\mid\beta\in\mathbb{N}\sigma\cap\Delta_+\}
\cup\{0^{|\sigma|}\},
\end{eqnarray}
\noindent 
where $\mathbb{N}\sigma = \left\{\sum c_i \alpha_i ~|~ c_i > 0 ~ \text{ and } \alpha_i \in \sigma\right\}$.  
%=\{\alpha_i(h)+\alpha_{i+1}(h)+\dots +\alpha_j(h)\mid 
%\{i,j\}\in \binom{[k]}{2} \text{ with }i<j\}.\]
\noindent
We proceed similarly if $\sigma$ is a maximally connected component of $\pi_2$ except that $\Delta_+$ is replaced by $\Delta_-$ in equations (\ref{S1}) and 
(\ref{S2}).

%Similarly, if $\sigma$ is a type A maximally connected
%component of $\pi_2$, then we say the multiset of eigenvalues 
%from $\sigma$ is
%\[S(\sigma)=\{\beta(\widehat{F})\mid\beta\in\mathbb{N}\sigma\cap\Delta_-\}
%\cup\{0^{\ceil{|\sigma|/2}}\},\]
%and if $\sigma$ is not of Type A then
%\[S(\sigma)=\{\beta(\widehat{F})\mid\beta\in\mathbb{N}\sigma\cap\Delta_-\}
%\cup\{0^{|\sigma|}\}.\]

\begin{example}\label{ex:spectrum}
In the running example of Figure \ref{fig:simple eigenvalues}, on the top there is a single type-A component $\sigma_1=\{\alpha_8, \alpha_7, \alpha_6\}$ and a single type-C component $\sigma_2=\{\alpha_3, \alpha_2, \alpha_1\}$. There are two type-A components on the bottom:  $\sigma_3=\{\alpha_8, \alpha_7\}$ and 
$\sigma_4=\{\alpha_5, \alpha_4, \alpha_3, \alpha_2\}$.  To demonstrate the computation of $\mathcal{E}(\sigma_i)$, note,
for example, that the positive roots for the computation of $\mathcal{E}(\sigma_1)$ are elements of the set $B_{\sigma_1}=\{\alpha_8, \alpha_7, \alpha_6, \alpha_8+\alpha_7, \alpha_7+\alpha_6, \alpha_8+\alpha_7+\alpha_6  \}$.  Applying each of $\beta\in B_{\sigma_1}$ to $\widehat{F}$ yields the multiset $\{-2, 1, 2, -1, 3, 1\} = \{-2,-1,1,1,2,3  \}.$  Since, $|\sigma_1|=3$, we have by equation (\ref{S1}) that

$$
\mathcal{E}(\sigma_1) = \{-2,-1,1,1,2,3\}\cup\{0,0\} = \{-2,-1,0,0,1,1,2,3\}.
$$
The positive roots for the computation of 
$\mathcal{E}(\sigma_2)$ are elements of the set 

$$B_{\sigma_2}=\{\alpha_3, \alpha_2, \alpha_1, \alpha_3+\alpha_2, \alpha_2+\alpha_1, \alpha_3+\alpha_2+\alpha_1, 2\alpha_2+\alpha_1, \alpha_3 + 2\alpha_2+\alpha_1, 2\alpha_3+2\alpha_2+\alpha_1\},$$ 
\noindent
which gives

$$
\mathcal{E}(\sigma_2) =
\{0,0,0,1,1,1,1,1,1\}\cup \{0,0,0\} = 
\{0,0,0,0,0,0,1,1,1,1,1,1\}.
$$
\noindent Similar computations yield

$$\mathcal{E}(\sigma_3)= \{-1,0,1,2 \} ~\text{and}~
\mathcal{E}(\sigma_4)= \{0,0,0,0,0,0,1,1,1,1,1,1\}. $$
\noindent 
\end{example}

\begin{remark}
We will use the Greek letter $\sigma$ when referring to the simple roots of a maximally connected component of an orbit meander.  However, there are times it will be more convenient to consider the set of eigenvalues associated to a set of consecutive vertices in an orbit meander.  
If $A$ is a set of consecutive vertices in an orbit meander, let $\mathcal{E}(A)$ be the eigenvalues associated to $A$.  That is, if $A = \{v_k, v_{k-1}, ... ,v_j\}$, let $\sigma = \{\alpha_k, \alpha_{k-1}, ..., \alpha_j \}$.  We make the following notational convention:
\begin{eqnarray*}
\mathcal{E}(A):=\mathcal{E}(\sigma).  
\end{eqnarray*}
\end{remark}

\begin{lemma}
If $\mf{p}(\pi_1\dd \pi_2)$ is a Frobenius seaweed, then the
multisets of eigenvalues contributed by each maximally
connected component form a multiset partition of the spectrum
of the seaweed.
\end{lemma}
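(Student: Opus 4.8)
The plan is to reduce the lemma to a single counting identity and deduce that identity from the Frobenius criterion of Theorem~\ref{thm:frobenius}.

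First I would fix a Frobenius functional $F$ (say the one furnished by the Kostant cascade) whose principal element $\widehat F$ lies in the Cartan subalgebra $\mf h\subseteq\mf p(\pi_1\dd\pi_2)$; then $\ad\widehat F$ is semisimple, annihilates $\mf h$, and scales each root space $\mf g_\beta$ of the seaweed by $\beta(\widehat F)$. Since
\[
\mf p(\pi_1\dd\pi_2)=\mf h\ \oplus\!\!\bigoplus_{\beta\in\Delta_+\cap\mathbb N\pi_1}\!\!\mf g_\beta\ \oplus\!\!\bigoplus_{\beta\in\Delta_+\cap\mathbb N\pi_2}\!\!\mf g_{-\beta},
\]
the spectrum of $\ad\widehat F$ is, as a multiset, $\{0^{\,n}\}$ (with $n=\dim\mf h$ the rank of $\mf g$) together with the numbers $\beta(\widehat F)$ for $\beta\in\Delta_+\cap\mathbb N\pi_1$ and $-\beta(\widehat F)$ for $\beta\in\Delta_+\cap\mathbb N\pi_2$. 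Because every positive root supported on $\pi_j$ has connected support, it is supported on a single maximally connected component of $\pi_j$; hence the two root-space blocks split exactly according to the components $\sigma$, and each piece is precisely the part of the corresponding $\mathcal E(\sigma)$ coming from roots (not from the appended zeros). So the lemma reduces to showing that the $n$ zeros coming from $\mf h$ can be apportioned among the components with $\sigma$ receiving the prescribed number
\[
z(\sigma)=\begin{cases}\ceil{|\sigma|/2}, & \sigma\text{ of type }A,\\ |\sigma|, & \sigma\text{ of type }B\text{ or }C,\end{cases}
\]
i.e. that $\sum_\sigma z(\sigma)=n$, the sum being over all maximally connected components of $\pi_1$ and of $\pi_2$.

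To prove this, write $z(\sigma)=|\sigma|-E(\sigma)$ where $E(\sigma)$ is the number of dashed edges the orbit meander draws inside $\sigma$: this is $\floor{|\sigma|/2}$ for a type-$A$ component (the longest-element twist $-w_j$ is the flip of the segment, with $\floor{|\sigma|/2}$ transpositions) and $0$ for a type-$B$ or type-$C$ component (there $-w_j=\mathrm{id}$). Since the components partition $\pi_1$ and $\pi_2$ and $\pi_1\cup\pi_2=\Pi$, we get $\sum_\sigma z(\sigma)=(|\pi_1|+|\pi_2|)-E=n+|\pi_1\cap\pi_2|-E$, where $E$ is the total number of dashed edges; so it suffices to show $E=|\pi_1\cap\pi_2|$. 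View the orbit meander as a graph on the vertex set $\Pi$ whose edges are the $E$ dashed edges; each vertex carries at most one top and at most one bottom dashed edge, so the graph is a disjoint union of paths and cycles, and its connected components are exactly the $\langle i_1,i_2\rangle$-orbits on $\Pi$. A vertex lying on a cycle is moved by both $i_1$ and $i_2$ and so lies in $\pi_1\cap\pi_2$; thus a cyclic component would be an $\langle i_1,i_2\rangle$-orbit disjoint from $\pi_\cup$, hence would contain a $\langle i_1i_2\rangle$-orbit disjoint from $\pi_\cup$, contradicting Theorem~\ref{thm:frobenius}. Therefore the orbit meander is a disjoint union of paths. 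A disjoint union of paths on $n$ vertices with $E$ edges has $n-E$ connected components, and a direct traversal shows each path is a single $\langle i_1i_2\rangle$-orbit, so there are exactly $n-E$ such orbits. Comparing with Theorem~\ref{thm:frobenius}, which forces the number of $\langle i_1i_2\rangle$-orbits to be $|\pi_\cup|=n-|\pi_1\cap\pi_2|$, gives $E=|\pi_1\cap\pi_2|$ and hence $\sum_\sigma z(\sigma)=n$.

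With the identity in hand one distributes the $n$ Cartan zeros by giving $z(\sigma)$ of them to each component $\sigma$, which is possible exactly because $\sum_\sigma z(\sigma)=n$, and the result is the asserted multiset partition of the spectrum into the blocks $\mathcal E(\sigma)$. I expect the main obstacle to be precisely the equality $\sum_\sigma z(\sigma)=n$: everything there rests on the structural observation that a Frobenius seaweed has an acyclic orbit meander, which is what rules out a zero-deficit relative to $\dim\mf h$. (For the type-$A$ components this bookkeeping is already contained in \textbf{\cite{Coll typea}}; the only new point is that each rank-$k$ type-$B$ or type-$C$ component contributes exactly $k$ Cartan directions, matching the $k$ extra zeros in its $\mathcal E(\sigma)$.)
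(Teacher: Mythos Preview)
The paper states this lemma without proof (it is followed immediately by an example, not a \texttt{proof} environment), so there is nothing to compare your argument against directly. Your proof is correct and supplies exactly the missing bookkeeping. The reduction to the identity $\sum_\sigma z(\sigma)=n$ is the right move, and your derivation of $E=|\pi_1\cap\pi_2|$ from the Frobenius criterion is clean: the key structural point---that the dashed-edge graph on $\Pi$ has no cycles when the seaweed is Frobenius, because every vertex on a cycle would lie in $\pi_1\cap\pi_2$ and hence some $\langle i_1i_2\rangle$-orbit would miss $\pi_\cup$---is exactly what is needed, and your count $z(\sigma)=|\sigma|-E(\sigma)$ matches the paper's prescription $\lceil|\sigma|/2\rceil$ in type~$A$ and $|\sigma|$ in types~$B$,~$C$ since the longest-element involution is the order-reversal (with $\lfloor|\sigma|/2\rfloor$ transpositions) in type~$A$ and the identity in types~$B$,~$C$.

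One small remark: when you assert that each path in the dashed-edge graph is a single $\langle i_1i_2\rangle$-orbit, it is worth noting explicitly that iterating $i_1i_2$ from an endpoint walks forward through the odd-indexed vertices while iterating $(i_1i_2)^{-1}=i_2i_1$ walks through the even-indexed ones, so the cyclic group does sweep out the whole path; this is the only place where the distinction between $\langle i_1,i_2\rangle$-orbits and $\langle i_1i_2\rangle$-orbits could cause confusion, and your phrase ``a direct traversal shows'' is doing real work there. Also, you are tacitly using the paper's standing hypothesis $\pi_1\cup\pi_2=\Pi$ when you write $|\pi_1|+|\pi_2|=n+|\pi_1\cap\pi_2|$; that is fine, but you might flag it.
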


\begin{example}
We union the sets $\mathcal{E}(\sigma_i)$ from Example \ref{ex:spectrum} to get the spectrum.  This data is consolidated in the following table.  

\begin{table}[H]
\[\begin{tabular}{|l||l|l|l|l|l|l|}
\hline
Eigenvalue& -2 & -1 & 0 & 1 & 2 & 3 \\
\hline
Multiplicity & 1 & 2 & 15 & 15 & 2 & 1 \\
\hline
\end{tabular}\]
\caption{Spectrum with multiplicities}
\label{tab:eigenvalue}
\end{table}
\end{example}

Notice that in Example \ref{ex:spectrum}, $\mathcal{E}(\sigma_i)$ is symmetric about one half for each $i$.  
Furthermore, the multiplicities in Table \ref{tab:eigenvalue} form a symmetric distribution. 
As the following Theorem \ref{thm:symmetry} shows,  these observations are not coincidences. 

\begin{theorem}\label{thm:symmetry}
Let $\mf{p}(\pi_1\dd \pi_2)$ be a Frobenius seaweed.
For each maximally connected component $\sigma$ of $\pi_1$ or $\pi_2$,
let $r_i$ be the multiplicity of the eigenvalue $i$ in $\mathcal{E}(\sigma)$. 
The sequence $(i)$ is symmetric about one-half.  Moreover, $r_{-i} = r_{i+1}$ for each eigenvalue $i$.
\end{theorem}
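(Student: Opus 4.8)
The plan is to prove Theorem~\ref{thm:symmetry} by analyzing $\mathcal{E}(\sigma)$ separately according to whether $\sigma$ is of type A or not, and within the type-A case, to exploit the pairing $\alpha \leftrightarrow \overline{\alpha}$ on positive roots established just before via relation~(\ref{SymmPos}). The symmetry claim ``$(r_i)$ symmetric about one-half, i.e. $r_{-i}=r_{i+1}$'' is precisely the statement that the involution $x \mapsto 1-x$ on $\mathbb{R}$ preserves the multiset $\mathcal{E}(\sigma)$ with multiplicities, so it suffices to exhibit, for each $\sigma$, a multiplicity-preserving involution on $\mathcal{E}(\sigma)$ sending each value $x$ to $1-x$.

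\textbf{Type-A components.} Here $\mathcal{E}(\sigma) = \{\beta(\widehat{F}) \mid \beta \in \mathbb{N}\sigma \cap \Delta_+\} \cup \{0^{\ceil{|\sigma|/2}}\}$. I would first split the positive roots of $\sigma = \{\alpha_k,\dots,\alpha_1\}$ (of type $A_k$) into those $\alpha = \alpha_j + \cdots + \alpha_i$ with $j+i = k+1$ (the ``central'' roots, which have no symmetric partner and satisfy $\alpha(\widehat{F}) = 1$ by Theorem~\ref{typeAsymmetricpositiveroot}'s proof-style argument), and those with $j+i \neq k+1$, which come in genuine pairs $\{\alpha,\overline{\alpha}\}$ with $\alpha(\widehat{F}) + \overline{\alpha}(\widehat{F}) = 1$. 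One must check $\alpha \mapsto \overline{\alpha}$ is a well-defined fixed-point-free involution on the non-central positive roots — this is a routine but slightly fiddly case check against the four-case definition of $\overline{\alpha}$ (the cases match up as $j \leftrightarrow k+1-i$, $i \leftrightarrow k+1-j$). So the non-central roots contribute pairs $\{x, 1-x\}$; and the central roots, whose count is $\floor{k/2}$ (one for each choice of the smaller index, plus the singleton $\alpha_{(k+1)/2}$ when $k$ is odd — total $\ceil{k/2}$ actually, need to count carefully), each contribute a value $1$, which must be paired with a $0$. This is exactly why $\ceil{|\sigma|/2}$ copies of $0$ are appended in~(\ref{S1}): each central root's eigenvalue $1$ pairs with one of these zeros under $x \mapsto 1-x$, and the counts match. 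The involution on $\mathcal{E}(\sigma)$ is then: swap within each $\{\alpha,\overline{\alpha}\}$ pair, and swap each central-root value $1$ with a distinct appended $0$.

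\textbf{Non-type-A components.} Here $\sigma$ contains the exceptional root and $\mathcal{E}(\sigma) = \{\beta(\widehat{F}) \mid \beta \in \mathbb{N}\sigma \cap \Delta_+\} \cup \{0^{|\sigma|}\}$. Using Table~\ref{tab:simple eigenvalue}, each simple root in a type-B or type-C component has $\alpha_i(\widehat{F}) \in \{0, \pm 1\}$ with a very rigid pattern, and one computes $\sigma$'s positive-root values directly. From the running examples ($\mathcal{E}(\sigma_2) = \{0^6, 1^6\}$) the pattern is that every positive root of such a $\sigma$ evaluates to either $0$ or $1$, and the number of roots evaluating to $1$ equals the number evaluating to $0$; appending $|\sigma|$ further zeros keeps the multiset of the form $\{0^m, 1^m\}$, which is manifestly invariant under $x \mapsto 1-x$. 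So the work here is the explicit (finite-pattern) verification that in each of types $B_{2k-1}$, $B_{2k}$, $C_k$ the positive roots split evenly between eigenvalue-$0$ and eigenvalue-$1$; this reduces to a counting identity about the root system of $B$/$C$ together with the alternating/zero values in Table~\ref{tab:simple eigenvalue}.

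\textbf{Main obstacle.} The genuinely delicate step is confirming that $\alpha \mapsto \overline{\alpha}$ is a fixed-point-free involution on the non-central positive roots of a type-$A_k$ component and that, together with the central roots, the bookkeeping of appended zeros in~(\ref{S1}) comes out exactly right — in particular reconciling the ceiling $\ceil{|\sigma|/2}$ with the count of central roots in both parities of $k$. Once the combinatorial pairing is pinned down, the symmetry $r_{-i} = r_{i+1}$ follows immediately, since the multiset $\mathcal{E}(\sigma)$ is a disjoint union of $\{x, 1-x\}$ pairs, and (for the non-type-A case) of the self-paired block $\{0^m,1^m\}$. Summing over all components $\sigma$ of $\pi_1$ and $\pi_2$ and invoking the multiset-partition lemma then gives the symmetric distribution of multiplicities for the full spectrum.
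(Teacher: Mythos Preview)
Your Type-A argument is correct and is exactly the paper's approach: pair each non-central root with its symmetric root $\overline{\alpha}$, and pair each of the $\lceil k/2\rceil$ central roots (value $1$) with one of the $\lceil k/2\rceil$ appended zeros. Your hesitation about the count is unnecessary: the central roots are the $\alpha_j+\cdots+\alpha_i$ with $j+i=k+1$ and $i\le j$, so $i$ runs from $1$ to $\lfloor(k+1)/2\rfloor=\lceil k/2\rceil$, on the nose.

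The non-type-A argument, however, has a genuine gap. Your claim that in types B and C every positive root evaluates to $0$ or $1$ is false for type B. From Table~\ref{tab:simple eigenvalue} the simple values in a type-$B$ component alternate $\pm 1$ (with an extra $0$ in the even-rank case), and a short root such as $\alpha_j+\cdots+\alpha_i$ with both endpoints of the same sign, or a long root $\alpha_j+\cdots+\alpha_{i+1}+2\alpha_i+\cdots+2\alpha_1$, can evaluate to $-1$ or $2$. So the multiset is not of the form $\{0^m,1^m\}$; one really must verify $r_{-1}=r_2$ and $r_0=r_1$. The paper does this by a direct multiplicity count in the four subcases ($B_{2k-1}$, $B_{2k}$ separately, then $C_k$), obtaining e.g.\ $r_{-1}=r_2=\binom{k}{2}$ and $r_0=r_1=\tfrac{k(3k-1)}{2}$ in the $B_{2k-1}$ case. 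Even in type C your balance claim is off: the positive roots contribute $\binom{k}{2}$ zeros and $\binom{k+1}{2}$ ones, an excess of exactly $k=|\sigma|$ ones, which the $|\sigma|$ appended zeros then absorb. The conclusion $\mathcal{E}(\sigma)=\{0^m,1^m\}$ for type C is correct, but the mechanism is not an even split among the roots themselves.
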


\begin{proof}
%For convenience, reorder the indices of the simple roots
%so that $\sigma=\{\alpha_1,\dots ,\alpha_k\}$ as in 
%Lemma \ref{eigenvalue table}.
We prove this in the case
that $\sigma$ is a maximally connected component of $\pi_1$.
The case that $\sigma$ is a maximally connected component of $\pi_2$
is similar and is omitted.

\setcounter{case}{0}
\begin{case}\label{symmetryA}
Type A
\end{case}
Suppose $\sigma$ is of type $A_k$.  
Then $\mathcal{E}(\sigma)$ is comprised of elements of the form 
\[\alpha_j(\widehat{F})+\alpha_{j-1}(\widehat{F})+\dots +\alpha_i(\widehat{F}),\]
where $k\geq j\geq i\geq 1$ along with $\lc \frac{k}{2}\rc$ zeros per Equation (\ref{S1}).  

Each element $\alpha(\widehat{F})$ in $\mathcal{E}(\sigma)$ with $j+i \neq k+1$ has a symmetric eigenvalue $\overline{\alpha}$ with $\alpha(\widehat{F})+\overline{\alpha}(\widehat{F}) = 1$.  
Moreover, there are $\lc \frac{k}{2}\rc$ positive roots 
$\alpha_j + \alpha_{j-1} ... + \alpha_i$ with $k \geq j \geq i \geq 1$ and $j+i = k+1$.  These satisfy
\[\alpha_j(\widehat{F})+\alpha_{j-1}(\widehat{F})+\dots +\alpha_i(\widehat{F})=1\]
 and are in bijective correspondence with the zeros from Equation (\ref{S1}).  Therefore, the multiset of eigenvalues from $\sigma$ is symmetric about one-half.  

\begin{case}\label{symmetryB}
Type B
\end{case}
Suppose $\sigma$ is of type B, with odd cardinality.
For convenience, reorder the indices of the simple roots
so that $\sigma=\{\alpha_{2k-1},\alpha_{2k-2},\dots ,\alpha_{1}\}$ as in 
Lemma \ref{eigenvalue table}.
Then $\mathcal{E}(\sigma)$ is comprised of elements of the form
\[\alpha_j(\widehat{F})+\alpha_{j-1}(\widehat{F})+\dots +\alpha_i(\widehat{F}),\]
where $2k-1\geq j\geq i\geq 1$, or
%where $1\leq i\leq j\leq 2k-1$, or
\[\alpha_{j}(\widehat{F})+\dots +\alpha_{i+1}(\widehat{F})+2\alpha_{i}(\widehat{F})+\dots +2\alpha_{1}(\widehat{F}),\]
%\[\alpha_i(h)+\dots +\alpha_{j-1}(h)+2\alpha_{j}(h)+\dots +2\alpha_{2k-1}(h),\]
where $2k-1\geq j>i\geq 2$, along with $2k-1$ zeros per Equation (\ref{S2}).

We need only consider eigenvalues $-1, 0, 1,$ and $2$, and these eigenvalues have multiplicities given by 

\[r_{-1}=\sum_{i=1}^{k-1}i=\binom{k}{2},\]
\[r_{0}=\left(\sum_{i=1}^{k-1}2i\right)+(2k-1)+\left(\sum_{i=1}^{k-2}i\right)
=\dfrac{k(3k-1)}{2},\]
\[r_{1}=\left(\sum_{i=1}^{k}i\right)+\left(\sum_{i=1}^{k-1}2i\right)
=\dfrac{k(3k-1)}{2},\]
\[r_{2}=\sum_{i=1}^{k-1}i=\binom{k}{2}.\]

If $\sigma$ is of type B with even cardinality,
we reorder the indices of the simple roots
so that $\sigma=\{\alpha_{2k},\dots ,\alpha_{1}\}$ as in 
Lemma \ref{eigenvalue table}. 
Then $\mathcal{E}(\sigma)$ is comprised of elements of the form
\[\alpha_j(\widehat{F})+\alpha_{j-1}(\widehat{F})+\dots +\alpha_i(\widehat{F}),\]
where $2k\geq j\geq i\geq 1$, or

\[\alpha_{j}(\widehat{F})+\dots +\alpha_{i+1}(\widehat{F})+2\alpha_{i}(\widehat{F})+\dots +2\alpha_{1}(\widehat{F}),\]
where $2k\geq j>i\geq 2$, along with $2k$ zeros per Equation (\ref{S2}).
Then

\[r_{-1}=\sum_{i=1}^{k-1}i=\binom{k}{2},\]
\[r_{0}=\left(\sum_{i=1}^{k}2(i-1)+1\right)+(2k)+\left(\sum_{i=1}^{k-1}i\right)
=3\binom{k+1}{2},\]
\[r_{1}=\left(\sum_{i=1}^{k}i+1\right)+(k)+\left(\sum_{i=1}^{k-1}2i\right)
=3\binom{k+1}{2},\]
\[r_{2}=\sum_{i=1}^{k-1}i=\binom{k}{2}.\]

\begin{case}\label{symmetryC}
Type C
\end{case}
Suppose $\sigma$ is of type C.  We again 
reorder the indices of the simple roots
so that $\sigma=\{\alpha_{k},\alpha_{k-1},\dots ,\alpha_{1}\}$ as in 
Lemma \ref{eigenvalue table}.
Then $\mathcal{E}(\sigma)$ is comprised of elements of the form
\[\alpha_j(\widehat{F})+\alpha_{j-1}(\widehat{F})+\dots +\alpha_i(\widehat{F}),\]
where $k\geq j\geq i\geq 1$, or
\[\alpha_j(\widehat{F})+\dots +\alpha_{i+1}(\widehat{F})+2\alpha_{i}(\widehat{F})+\dots +2\alpha_{2}(\widehat{F})
+\alpha_1(\widehat{F}),\]
%\[\alpha_i(h)+\dots +\alpha_j(h)+2\alpha_{j}(h)+\dots +2\alpha_{k-1}(h)
%+\alpha_k(h),\]
where $k\geq j\geq i\geq 2$, along with $k$ zeros per Equation (\ref{S2}).

Counting the multiplicity of the eigenvalue 0, we see that 
there are $\binom{k-1}{2}$ eigenvalues of the form 
$\alpha_j(\widehat{F})+\dots +\alpha_i(\widehat{F})$ where $k\geq j>i\geq 2$, 
$k-1$ eigenvalues of the form $\alpha_i(\widehat{F})$ where $k\geq i\geq 2$,
and $k$ zeros per Equation (\ref{S2}) included in $\mathcal{E}(\sigma)$.
Thus $r_0=\binom{k-1}{2}+(k-1)+k=\binom{k+1}{2}$.

Counting the multiplicity of the eigenvalue 1, we see that 
there are $k$ eigenvalues of the form 
$\alpha_j(\widehat{F})+\dots +\alpha_1(\widehat{F})$ where $k\geq j\geq 1$, 
$k-1$ eigenvalues of the form 
$2\alpha_i(\widehat{F})+\dots 2\alpha_{2}(\widehat{F})+\alpha_1(\widehat{F})$ where $k\geq i\geq 2$, and $\binom{k-1}{2}$ eigenvalues of the form 
$\alpha_j(\widehat{F})+\dots +\alpha_{i+1}(\widehat{F})+2\alpha_{i}(\widehat{F})+\dots +2\alpha_{2}(\widehat{F})
+\alpha_1(\widehat{F})$ where $k\geq j>i\geq 2$.
It follows that $r_1=\binom{k+1}{2}$ as well.
\end{proof} 

We have the following corollary that will be used to prove the unbroken property.  

\begin{corollary}
Let $\mf{p}(\pi_1\dd \pi_2)$ be a Frobenius seaweed, and let $\sigma$ be a maximally connected component of type A, B, or C.  If $\mathcal{E}(\sigma)$ is an unbroken multiset, then $\mathcal{E}(\sigma) \cup [-\mathcal{E}(\sigma)]$ is an unbroken multiset.  Moreover, if $x$ is symmetric to any eigenvalue in $\mathcal{E}(\sigma)$, then $\mathcal{E}(\sigma) \cup \{x\}$ is an unbroken multiset.  
\end{corollary}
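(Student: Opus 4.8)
The plan is to combine the hypothesis that $\mathcal{E}(\sigma)$ is unbroken with the symmetry already established in Theorem~\ref{thm:symmetry}. First I would record that, by Theorem~\ref{thm:symmetry}, the multiplicity sequence $(r_i)$ of $\mathcal{E}(\sigma)$ satisfies $r_i = r_{1-i}$ for all $i$, so the \emph{support} of $\mathcal{E}(\sigma)$ (the set of integers occurring with positive multiplicity) is invariant under the reflection $m\mapsto 1-m$. Moreover, the explicit descriptions used in the proofs of Theorem~\ref{typeAsymmetricpositiveroot} and Theorem~\ref{thm:symmetry} show that $0$ and $1$ always lie in this support (there are appended zeros, and there is always a positive root evaluating to $1$). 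Granting that $\mathcal{E}(\sigma)$ is unbroken, its support is therefore an interval of consecutive integers closed under $m\mapsto 1-m$, hence of the form $\{p,p+1,\dots,1-p\}$ for some integer $p\le 0$.

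For the first assertion, I would observe that $-\mathcal{E}(\sigma)$ has support $\{-(1-p),\dots,-p\}=\{p-1,p,\dots,-p\}$. Since $p\le 0$ we have $p-1\le p\le -p$, so both intervals $\{p-1,\dots,-p\}$ and $\{p,\dots,1-p\}$ contain $p$; their union is thus the single interval $\{p-1,p,\dots,1-p\}$ of consecutive integers. That is exactly the statement that $\mathcal{E}(\sigma)\cup[-\mathcal{E}(\sigma)]$ is unbroken.

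For the second assertion, I would unwind the definition of symmetric eigenvalue: if $x$ is symmetric to an eigenvalue of $\mathcal{E}(\sigma)$, then by relation~(\ref{SymmPos}) we have $x=1-e$ for some $e$ occurring with positive multiplicity in $\mathcal{E}(\sigma)$, i.e. $e\in\{p,\dots,1-p\}$. Since that interval is closed under $m\mapsto 1-m$, we get $x\in\{p,\dots,1-p\}$ as well, so adjoining $x$ does not enlarge the support of $\mathcal{E}(\sigma)$, and $\mathcal{E}(\sigma)\cup\{x\}$ remains unbroken.

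The argument is essentially bookkeeping, and I do not expect a serious obstacle; the only points requiring care are (i) justifying that the support is genuinely a \emph{symmetric interval} — which uses the unbroken hypothesis together with Theorem~\ref{thm:symmetry}, not merely symmetry of the multiplicity distribution — and (ii) confirming that $0$ and $1$ always appear, so that $p\le 0$ and the two intervals in the first part actually overlap rather than merely abut. Both are immediate from the case analysis already carried out for Theorem~\ref{thm:symmetry}.
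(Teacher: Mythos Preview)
Your argument is correct and matches the paper's intent: the corollary is stated there without proof, as an immediate consequence of Theorem~\ref{thm:symmetry}, and what you have written is precisely the routine unpacking of that consequence. Your two ``care'' points---that the unbroken hypothesis together with the reflection $m\mapsto 1-m$ forces the support to be an interval $\{p,\dots,1-p\}$, and that $0,1$ always lie in $\mathcal{E}(\sigma)$ so $p\le 0$---are exactly the right observations, and both are indeed immediate from the definitions (\ref{S1})--(\ref{S2}) and the case analysis in Theorem~\ref{thm:symmetry}.
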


%%%%%%%%%%%%%%%%%%%%%%%%%%%%%%%%%%%%%%%%%%%%%%%%%%%%%%%%%%%%%%%%%%%%%%
%%%%%%%%%%%%%%%%%%%%%%%%%%%%%%%%%%%%%%%%%%%%%%%%%%%%%%%%%%%%%%%%%%%%%%
\subsection{Unbroken}
%%%%%%%%%%%%%%%%%%%%%%%%%%%%%%%%%%%%%%%%%%%%%%%%%%%%%%%%%%%%%%%%%%%%%%
%%%%%%%%%%%%%%%%%%%%%%%%%%%%%%%%%%%%%%%%%%%%%%%%%%%%%%%%%%%%%%%%%%%%%%

A key concept for showing the spectrum is unbroken is a ``U-turn" in an orbit of the orbit meander.  
By U-turn, we mean an application of Lemma \ref{eigenvalue orbit} 
in the case that $(\alpha_i,i_1\alpha_i)<0$.  Note that U-turns can occur only in type $A_{2k}$.  
For example, the orbit meander in Figure \ref{fig:simple eigenvalues} has two U-turns: one in the orbit 
$\{\alpha_7,\alpha_6,\alpha_8\}$ and one in the orbit $\{\alpha_4,\alpha_3\}$.  We will find it convenient to break U-turns into two types of U-turns: \textit{right U-turns} and \textit{left U-turns}.  

Arrange all orbits so that the first entry is a fixed point in $\pi_1\cap\pi_2$.
If a U-turn involves a dashed edge
from $v_i^-$ to $v_{i-1}^-$, or a dashed edge from $v_i^+$ to $v_{i+1}^+$, we call this a right U-turn.  
Similarly, if a U-turn involves a dashed edge
from $v_i^-$ to $v_{i+1}^-$, or a dashed edge from $v_i^+$ to $v_{i-1}^+$, we call this a left U-turn.  See Figure \ref{fig:uturns}; the right U-turn is represented by a red dashed arc, and the left U-turns are blue.

\begin{figure}[H]
\[\begin{tikzpicture}
[decoration={markings,mark=at position 0.6 with 
{\arrow{angle 90}{>}}}]

\draw (1,.75) node[draw,circle,fill=black,minimum size=5pt,inner sep=0pt] (1+) {};
\draw (2,.75) node[draw,circle,fill=black,minimum size=5pt,inner sep=0pt] (2+) {};
\draw (3,.75) node[draw,circle,fill=black,minimum size=5pt,inner sep=0pt] (3+) {};
\draw (4,.75) node[draw,circle,fill=black,minimum size=5pt,inner sep=0pt] (4+) {};
\draw (5,.75) node[draw,circle,fill=black,minimum size=5pt,inner sep=0pt] (5+) {};
\draw (6,.75) node[draw,circle,fill=black,minimum size=5pt,inner sep=0pt] (6+) {};
\draw (7,.75) node[draw,circle,fill=white,minimum size=5pt,inner sep=0pt] (7+) {};
\draw (8,.75) node[draw,circle,fill=black,minimum size=5pt,inner sep=0pt] (8+) {};
\draw (9,.75) node[draw,circle,fill=black,minimum size=5pt,inner sep=0pt] (9+) {};
\draw (10,.75) node[draw,circle,fill=black,minimum size=5pt,inner sep=0pt] (10+) {};

\draw (1,0) node[draw,circle,fill=black,minimum size=5pt,inner sep=0pt] (1-) {};
\draw (2,0) node[draw,circle,fill=black,minimum size=5pt,inner sep=0pt] (2-) {};
\draw (3,0) node[draw,circle,fill=black,minimum size=5pt,inner sep=0pt] (3-) {};
\draw (4,0) node[draw,circle,fill=black,minimum size=5pt,inner sep=0pt] (4-) {};
\draw (5,0) node[draw,circle,fill=white,minimum size=5pt,inner sep=0pt] (5-) {};
\draw (6,0) node[draw,circle,fill=black,minimum size=5pt,inner sep=0pt] (6-) {};
\draw (7,0) node[draw,circle,fill=black,minimum size=5pt,inner sep=0pt] (7-) {};
\draw (8,0) node[draw,circle,fill=black,minimum size=5pt,inner sep=0pt] (8-) {};
\draw (9,0) node[draw,circle,fill=black,minimum size=5pt,inner sep=0pt] (9-) {};
\draw (10,0) node[draw,circle,fill=white,minimum size=5pt,inner sep=0pt] (10-) {};

\draw (1-) to (4-);
\draw (6-) to (9-);
\draw (1+) to (6+);
\draw (8+) to (9+);
\draw [double distance=.8mm,postaction={decorate}] (10+) to (9+);

\draw [dashed] (1+) to [bend left=60] (6+);
\draw [dashed] (2+) to [bend left=60] (5+);
\draw [dashed, color=red, line width = 1.2pt] (3+) to [bend left=60] (4+);
\draw [dashed] (1-) to [bend right=60] (4-);
\draw [dashed, color=blue, line width = 1.2pt] (2-) to [bend right=60] (3-);
\draw [dashed] (6-) to [bend right=60] (9-);
\draw [dashed, color=blue, line width = 1.2pt] (7-) to [bend right=60] (8-);

;\end{tikzpicture}\]
\caption{The orbit meander of $\mf{p}_{10}^\C(
\{\alpha_{10}, \alpha_9,\alpha_8,\alpha_7, \alpha_6,\alpha_5,\alpha_3,\alpha_2, \alpha_1 
\dd 
\alpha_{10}, \alpha_9,\alpha_8,\alpha_7,\alpha_5, \alpha_4,\alpha_3, \alpha_2\})$ with U-turns highlighted}
\label{fig:uturns}
\end{figure}
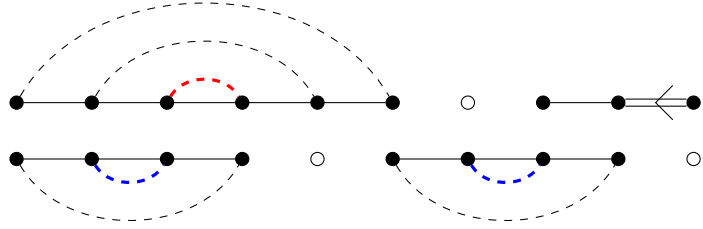

The $<i_1i_2>$ orbits of the seaweed in Figure \ref{fig:uturns} are $\{\alpha_3, \alpha_4\}$, 
$\{\alpha_2, \alpha_5, \alpha_{10}, \alpha_7, \alpha_8, \alpha_9, \alpha_6\}$, and $\{\alpha_1\}$.  
The orbit $\{\alpha_3, \alpha_4\}$ has a left U-turn, and the orbit $\{\alpha_2, \alpha_5, \alpha_{10}, \alpha_7, \alpha_8, \alpha_9, \alpha_6\}$ has a right U-turn and a left U-turn.  
As it turns out, an orbit cannot have more than two U-turns.  
Moreover, if an orbit has two U-turns, one must be a right U-turn and one must be a left U-turn.  
Combining this observation and Lemma \ref{eigenvalue orbit}, we see that the simple eigenvalues for a Frobenius seaweed in types A, B, or C are bounded in absolute value by $3$. 
We record this in the following Lemma.

%then the orbit will now always have a previously traced edge on its right.
%This is because the dashed edges from two consecutively labeled vertices
%can never go in different directions, i.e. consecutively labeled vertices
%cannot belong to distinct maximally connected components.
%The condition of having a previously traced edge on the right may also
%refer to having the loop of a fixed point on the right.

%then the orbit will now always have a previously traced edge on its left.
%Which again, may include having the loop of a fixed point on the left.

\begin{lemma}\label{lem:simple}
The absolute value of every simple eigenvalue is either 0, 1, 2, or 3.
\end{lemma}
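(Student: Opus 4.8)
The plan is to compute each simple eigenvalue $\alpha_i(\widehat{F})$ by walking along the $\langle i_1 i_2\rangle$-orbit $O$ containing $\alpha_i$ in the orbit meander, beginning at a vertex of $O$ whose value is forced by Table~\ref{tab:simple eigenvalue} and propagating outward by repeated use of Lemma~\ref{eigenvalue orbit}. Two facts drive this: every orbit is a path, one of whose endpoints carries a value in $\{-1,0,1\}$ read directly off Table~\ref{tab:simple eigenvalue}; and, as recorded in the discussion preceding the statement, an orbit has at most two U-turns.

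First I would record the effect of a single edge of $O$. For a type-$B$ or type-$C$ component $\sigma\subseteq\pi_j$ the longest element $w_j=-\mathrm{id}$, so $i_j$ is the identity on $\sigma$; hence every non-loop dashed edge of the orbit meander lies inside a type-$A$ component, and crossing it is governed by Lemma~\ref{eigenvalue orbit}, which replaces the current value $x$ by $c-x$ for some $c\in\{-1,0,1\}$, with $c=0$ exactly at a non-U-turn and $|c|=1$ exactly at a U-turn. Thus a non-U-turn leaves $|x|$ unchanged, while a U-turn changes it to $|c-x|\le|x|+1$.

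Next I would locate the starting vertex. By Theorem~\ref{thm:frobenius} the orbit $O$ contains exactly one element $\alpha_*$ of $\pi_{\cup}=\Pi\setminus(\pi_1\cap\pi_2)$; since $\alpha_*\notin\pi_1$ or $\alpha_*\notin\pi_2$, it is fixed by the corresponding involution, so $O$ is a path with $\alpha_*$ as an endpoint (the case $|O|=1$ being handled outright by Table~\ref{tab:simple eigenvalue}). Let $\beta$ be the other endpoint; it is fixed by $i_1$ or $i_2$, say by $i_1$. If $\beta\notin\pi_1\cap\pi_2$ then $\beta$ would be a second element of $\pi_{\cup}$ in $O$, which is impossible; hence $\beta\in\pi_1\cap\pi_2$, and $i_1\beta=\beta$ forces $\beta$ to be the central node of an odd type-$A$ component of $\pi_1$ or a node of a type-$B$ or type-$C$ component of $\pi_1$. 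In each of these cases Lemma~\ref{eigenvalue table} determines $\beta(\widehat{F})\in\{-1,0,1\}$, so the propagation may start at $\beta$.

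Finally I would assemble the bound. The unique path in $O$ from $\beta$ to $\alpha_i$ contains at most as many U-turns as $O$ itself, namely at most two; non-U-turn edges preserve the absolute value and each of the at most two U-turns raises it by at most one, so $|\alpha_i(\widehat{F})|\le|\beta(\widehat{F})|+2\le 3$. Since the Table~\ref{tab:simple eigenvalue} seeds are integers and the relations of Lemma~\ref{eigenvalue orbit} preserve integrality, every simple eigenvalue is an integer, and therefore $|\alpha_i(\widehat{F})|\in\{0,1,2,3\}$. The main obstacle is the bound of two on the number of U-turns in an orbit: this is the only genuinely combinatorial point, and rests on the observation that the dashed arcs inside a single component of the orbit meander are nested, so a traversing orbit path can create a width-one (hence U-turn) arc at most once on the way ``in'' and once on the way ``out'' --- which is also why, when two U-turns occur, they have opposite (left/right) handedness.
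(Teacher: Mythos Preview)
Your approach is essentially the paper's: start the orbit at its fixed endpoint in $\pi_1\cap\pi_2$ (value in $\{-1,0,1\}$ by Table~\ref{tab:simple eigenvalue}), propagate via Lemma~\ref{eigenvalue orbit}, and bound by the number of U-turns. Your propagation bookkeeping is in fact more explicit than the paper's, which leaves that part implicit.

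The one place you are thinner than the paper is exactly the place you flag as the main obstacle. The paper's proof is almost entirely devoted to the two-U-turn bound, and its argument is not really the nesting of arcs within a single component; rather it tracks the global left/right direction of the orbit along the line of simple roots and argues that a second U-turn of the \emph{same} handedness would force the path to self-intersect and hence never reach its terminal element of $\pi_\cup$, contradicting Theorem~\ref{thm:frobenius}. Your ``once on the way in, once on the way out'' is gesturing at this direction-tracking idea, but the nested-arcs remark alone does not rule out U-turns occurring in several distinct even type-$A$ components; it is the global direction/self-intersection argument that does. If you replace your final sentence with that reasoning, your proof matches the paper's.
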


\begin{proof}
%By winding up lemma, the number of maximally connected components of
%even cardinality (including cardinality zero, need to define this???)
%is fixed at the base of induction.

The orbit meander for a Frobenius type-A seaweed has exactly 2 maximally connected
components of even size. It follows that the largest absolute value
of a simple eigenvalue is 3.

For seaweeds of types B and C, it's possible to 
have more than 2 maximally connected components of even size.
Indeed, start from a fixed point contained in $\pi_1\cap\pi_2$.
If the orbit never U-turns, then we're done. Otherwise follow the
orbit until its first U-turn. 
Without loss of generality, we proceed with the proof in the case that the first U-turn is a right U-turn.

The orbit may make a second U-turn. However, it cannot make a second
right U-turn, as the orbit would self-intersect 
and never terminate at an element of $\pi_\cup$. Therefore, a second U-turn must be to the left. But now the orbit has
previously traced edges on both the left and right. Additional U-turns
are not allowed.  The result follows.
\end{proof}

\begin{remark}
Using the notation above, it follows from the proof of Lemma \ref{lem:simple} that the following table gives the possible simple eigenvalues for components of the various types.  

\begin{table}[H]
\[\begin{tabular}{|c|c|}
\hline
Component type & Possible values of $\alpha_i(\widehat{F})$  \\
\hline
\hline
A & 
$-2, -1, 0, 1, 2, 3$ \\
\hline
B & $-1, 0, 1$  \\
\hline
C & $0,1$\\
\hline
\end{tabular}\]
\caption{Values of $\alpha_i(\widehat{F})$}
\label{tab:ABCsimple eigenvalue}
\end{table}

\end{remark}

To prove the unbroken property for types B and C seaweeds, we will find it convenient to express these seaweeds in terms of sequences of flags defining them.   Let $C_{\leq n}$ denote the set of sequences of positive integers whose sum 
is less than or equal to $n$, and call each integer in the string a \textit{part}.
Let $\mathcal{P}(X)$ denote the power set of a set $X$.
Given $\ul{a}=(a_1,a_2,\dots ,a_m)\in \Cn$, define a bijection
$\varphi:\Cn\rightarrow \mathcal{P}(\Pi)$ by
\[\varphi(\ul{a})=\{\alpha_{n+1-a_1},\alpha_{n+1-(a_1+a_2)},
\dots ,\alpha_{n+1-(a_1+a_2+\dots +a_m)}\}.\]
Then define \[\mf{p}_n(\ul{a} \dd \ul{b})=
\mf{p}\left(\Pi\setminus\varphi(\ul{a}) \dd \Pi\setminus\varphi(\ul{b})\right),\]
and let $\mathcal{M}_n(\ul{a} \dd \ul{b})$ denote the orbit meander of 
$\mf{p}_n(\ul{a} \dd \ul{b})$.

\begin{example}
For example,
$\mf{p}_8^\C(\{\alpha_6,\alpha_5,\alpha_4,\alpha_3
\dd \alpha_8,\alpha_7,\alpha_6,\alpha_4,
\alpha_3,\alpha_2,\alpha_1\})=
\mf{p}^\C_8((1,1,5,1)\dd (4))$.
\end{example}

If $a_1+\dots +a_m=n$, then each part $a_i$ corresponds to a 
maximally connected component $\sigma$ of cardinality $|\sigma|=a_i-1$,
all of which are of type A.
If $a_1+\dots +a_m=r<n$, then each part $a_i$ corresponds to a 
type-A maximally connected component $\sigma$ of cardinality $|\sigma|=a_i-1$,
and there is one additional maximally connected component of cardinality $n-r$,
which is of type B or C if $n-r>1$.

%Given any subset $\pi\subseteq \Pi$, with maximally connected
%components $\sigma_1,\dots ,\sigma_j$, let $\ul{a}=(a_1,\dots,a_m)$
%denote the sequence of cardinalities of the maximally connected
%components, but a zero must be recorded in the sequence for each
%root not contained in $\pi$. We denote the seaweed 
%$\mf{p}_n(\pi_1\dd\pi_2)=\mf{p}(\ul{a}\dd\ul{b})$,
%where $\ul{a}$ and $\ul{b}$ are the sequences of cardinalities
%corresponding to $\pi_1$ and $\pi_2$ respectively. For example
%$\mf{p}_8^\C(\{\alpha_1,\alpha_2,\alpha_3,
%\alpha_5,\alpha_6,\alpha_7,\alpha_8
%\dd \alpha_3,\alpha_4,\alpha_5,\alpha_6\})=
%\mf{p}^\C((3,0,4)\dd(0,0,4,0,0))$.

The following ``Winding-up" lemma can be used to develop any Frobenius orbit meander of any size or configuration.   It can be regarded as the inverse graph-theoretic rendering of Panyushev's well-known reduction \textbf{\cite{Panyushev1}}.

%follows immediately from Panyushev \textbf{\cite{Panyushev1}} Theorem 5.2 
%and Theorem 5.5.

\begin{lemma}[Coll et al. \textbf{\cite{Meanders3}}, Lemma 4.2]\label{Expansion}
Let $\mathcal{M}_n(\ul{c} \dd \ul{d})$ be any type-B or type-C Frobenius
orbit meander, and without loss
of generality, assume that $\sum c_i=q+\sum d_i=n$.
Then $\mathcal{M}_n(\ul{c} \dd \ul{d})$ is the result of a sequence of the 
following moves starting from $\mathcal{M}_q(1^q \dd \emptyset)$.
Starting with an orbit meander 
$\mathcal{M}=\mathcal{M}_n\left(a_{1},a_{2}, \dots ,a_{m}\dd b_{1},b_{2}, \dots ,b_{t}\right)$, 
create an orbit meander $\mathcal{M}'$ by one of of the following:

\begin{enumerate}
\item {\bf Block Creation:}
$\displaystyle
\mathcal{M}'=\mathcal{M}_{n+a_1}(2a_{1},a_{2}, \dots ,a_{m}\dd a_1, b_{1},b_{2}, \dots ,b_{t})$,
\item {\bf Rotation Expansion:}
$\displaystyle
\mathcal{M}'=\mathcal{M}_{n+a_1-b_1}(2a_{1}-b_1,a_{2},a_3, \dots ,a_{m}\dd a_1, b_{2},b_{3}, \dots ,b_{t})$, provided that $a_1>b_1$,
\item {\bf Pure Expansion:}
$\displaystyle
\mathcal{M}'=\mathcal{M}_{n+a_2}(a_{1}+2a_2,a_{3},a_4, \dots ,a_{m}\dd a_2,b_{1},b_{2}, \dots ,b_{t})$,
\item {\bf Flip-Up:}
$\displaystyle
\mathcal{M}'=\mathcal{M}_{n}(b_{1},b_{2}, \dots ,b_{t}\dd a_{1},a_{2}, \dots ,a_{m})$.
\end{enumerate}

%Let $G=M_n(\ul{c} \dd \ul{d})$ be the orbit meander of any 
%type BC Frobenius seaweed, and without loss
%of generality assume that the last entry of $\ul{d}$
%is a positive number $q$.
%$G$ is the result of a sequence of the 
%following moves starting from $M_q((0,\dots,0) \dd (q))$.
%Starting with an orbit meander 
%$M=M_n\left(a_{1},a_{2}, \dots ,a_{m}\dd b_{1},b_{2}, \dots ,b_{t}\right)$
%create an orbit meander $M'$ by one of of the following:

%\begin{enumerate}
%\item {\bf Large Block Creation:}
%$\displaystyle
%M'=M_{n+a_1+1}(2a_{1}+1,a_{2}, \dots ,a_{m}
%\dd a_1,0,b_{1},b_{2}, \dots ,b_{t})$, provided that $a_1\geq 1$.
%\item {\bf Small Block Creation:}
%$\displaystyle
%M'=M_{n+1}(1,a_{2}, \dots ,a_{m}
%\dd 0,b_{1},b_{2}, \dots ,b_{t})$, provided that $a_1=0$.
%\item {\bf Large Rotation Expansion:}
%$\displaystyle
%M'=M^\C_{n+a_1-b_1?}(2a_{1}-b_1,a_{2},a_3, \dots ,a_{m}
%\dd a_1, b_{2},b_{3}, \dots ,b_{t})$, provided that $a_1>b_1\geq 1$.
%\item {\bf Small Rotation Expansion:}
%$\displaystyle
%M'=M^\C_{n+a_1-b_1?}(2a_{1},a_{2},a_3, \dots ,a_{m}
%\dd a_1,0, b_{2},b_{3}, \dots ,b_{t})$, provided that $a_1>b_1=0$.
%\item {\bf Large Pure Expansion:}
%$\displaystyle
%M'=M^\C_{n+a_2+1}(a_{1}+2(a_2+1),a_{3},a_4, \dots ,a_{m}
%\dd a_2,0,b_{1},b_{2}, \dots ,b_{t})$, provided that $a_2\geq 1$.
%\item {\bf Small Pure Expansion:}
%$\displaystyle
%M'=M^\C_{n+1}(a_{1}+2,a_{3},a_4, \dots ,a_{m}
%\dd 0,b_{1},b_{2}, \dots ,b_{t})$, provided that $a_2=0$.
%\item {\bf Flip-Up:}
%$\displaystyle
%M'=M^\C_{n}(b_{1},b_{2}, \dots ,b_{t}\dd a_{1},a_{2}, \dots ,a_{m})$.
%\end{enumerate}
\end{lemma}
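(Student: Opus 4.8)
The plan is to reverse Panyushev's index-preserving reductions: the four listed moves are precisely the orbit-meander renderings of the inverses of those reductions (Flip-Up being the inverse of the tautological top/bottom exchange), so iterating them in reverse should contract any type-B or type-C Frobenius orbit meander down to $\mathcal{M}_q(1^q \dd \emptyset)$. I would argue by induction on $n$. Two preliminaries: first, $\mathcal{M}_q(1^q \dd \emptyset)$ is itself Frobenius, since there $\pi_1 = \emptyset$ and $\pi_2 = \Pi$, so $i_1 = i_2 = \mathrm{id}$ (using $w_j = -\mathrm{id}$ on a full type-B or type-C system), every $\langle i_1 i_2 \rangle$-orbit is a singleton, and Theorem~\ref{thm:frobenius} applies; here $q = n - \sum d_i$ is the size of the unique type-B or type-C maximally connected component. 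Second, I may assume $\pi_1 \cup \pi_2 = \Pi$ (otherwise the seaweed is a direct sum of strictly smaller seaweeds with the B/C component confined to one of them), which forces the partial sums of $\ul c$ and of $\ul d$ to be disjoint, so $c_1 \neq d_1$ whenever both compositions are nonempty; a short Theorem~\ref{thm:frobenius} argument likewise shows that exactly one of $\sum c_i$, $\sum d_i$ equals $n$ (were both $< n$, a full sub-block of the B/C component would sit inside $\pi_1 \cap \pi_2$ and contribute singleton orbits meeting $\pi_\cup$ in nothing), so the normalization $\sum c_i = n$ costs only a Flip-Up.

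For the base case, suppose $\ul d = \emptyset$, equivalently $n = q$. Then $\pi_2 = \Pi$ and $i_2 = \mathrm{id}$, so the $\langle i_1 i_2 \rangle$-orbits are the $\langle i_1 \rangle$-orbits; any such orbit contained in $\pi_1$ meets $\pi_\cup = \Pi \setminus \pi_1$ in nothing, so by Theorem~\ref{thm:frobenius} the Frobenius hypothesis forces $\pi_1 = \emptyset$, i.e.\ $\ul c = 1^n$. Hence $\mathcal{M}_n(\ul c \dd \ul d) = \mathcal{M}_q(1^q \dd \emptyset)$ and the empty sequence of moves suffices.

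For the inductive step, both $\ul c$ and $\ul d$ are nonempty with $c_1 \neq d_1$, and after a Flip-Up if necessary I may assume $c_1 > d_1$. I would then produce a strictly smaller orbit meander $\mathcal{M}'$ together with one forward move carrying it onto $\mathcal{M}_n(\ul c \dd \ul d)$, according to three cases:
\begin{itemize}
\item if $c_1 = 2 d_1$, take $\mathcal{M}' = \mathcal{M}_{n - d_1}(d_1, c_2, \dots, c_m \dd d_2, \dots, d_t)$ and apply Block Creation;
\item if $c_1 > 2 d_1$, take $\mathcal{M}' = \mathcal{M}_{n - d_1}(c_1 - 2 d_1, d_1, c_2, \dots, c_m \dd d_2, \dots, d_t)$ and apply Pure Expansion;
\item if $d_1 < c_1 < 2 d_1$, take $\mathcal{M}' = \mathcal{M}_{n - (c_1 - d_1)}(d_1, c_2, \dots, c_m \dd 2 d_1 - c_1, d_2, \dots, d_t)$ and apply Rotation Expansion.
\end{itemize}
Matching these against the definitions of the three moves is a direct computation; in every case the size strictly decreases (to $n - d_1$ or $n - (c_1 - d_1)$), while the B/C component of $\mathcal{M}'$ still has size $q$, so $\mathcal{M}'$ is again a type-B or type-C orbit meander with the same invariant. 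Granting that $\mathcal{M}'$ is again Frobenius, the induction hypothesis writes it as a sequence of moves starting from $\mathcal{M}_q(1^q \dd \emptyset)$; appending the forward move above, followed by the Flip-Up if one was used, completes the induction.

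The hard part will be verifying that the three expansion moves preserve the Frobenius property — equivalently, that each $\mathcal{M}'$ above again has index zero. Each such move winds a single arc of the meander up over an adjacent block, and the verification reduces to showing that this local surgery induces a bijection between the $\langle i_1 i_2 \rangle$-orbits of $\mathcal{M}'$ and those of $\mathcal{M}_n(\ul c \dd \ul d)$ that preserves, orbit by orbit, the number of elements lying in $\pi_\cup$; Theorem~\ref{thm:frobenius} then carries index zero across in both directions. Since this is exactly Panyushev's reduction read backwards through the dictionary between standard and orbit meanders, I would either invoke \cite{Panyushev1} directly or carry out the orbit bookkeeping by hand; with that step in place, the case analysis above shows that every type-B or type-C Frobenius orbit meander arises as claimed.
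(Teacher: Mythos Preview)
The paper does not prove this lemma; it is quoted from \textbf{\cite{Meanders3}} (Lemma 4.2 there) and used as a black box for the induction in Theorem~\ref{thm:unbroken}.  So there is no ``paper's own proof'' to compare against---only the remark that the lemma ``can be regarded as the inverse graph-theoretic rendering of Panyushev's well-known reduction \textbf{\cite{Panyushev1}}.''

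Your proposal is exactly that inverse rendering, and it is the standard argument.  The base case (when $\ul d=\emptyset$ the Frobenius condition via Theorem~\ref{thm:frobenius} forces $\ul c=1^q$) is correct: with $\pi_2=\Pi$ one has $i_2=\mathrm{id}$, and any $\alpha\in\pi_1$ has its entire $\langle i_1\rangle$-orbit inside $\pi_1$, hence disjoint from $\pi_\cup$.  The trichotomy on $c_1$ versus $2d_1$ is precisely Panyushev's reduction read backwards, and your three candidate $\mathcal{M}'$'s do map forward to $\mathcal{M}_n(\ul c\dd\ul d)$ under Block Creation, Pure Expansion, and Rotation Expansion respectively; a direct check shows each has top composition summing to the new size and bottom defect still equal to $q$, so the B/C tail is preserved.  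Your observation that $\pi_1\cup\pi_2=\Pi$ forces $c_1\neq d_1$, and that Frobenius together with Theorem~\ref{thm:frobenius} forces exactly one of $\sum c_i,\sum d_i$ to equal $n$, is also right (if both were $<n$ then $\alpha_1\in\pi_1\cap\pi_2$ is fixed by both $i_1,i_2$ and gives a singleton orbit missing $\pi_\cup$).

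The one place you explicitly leave open---that each of the three expansion moves preserves index zero---is genuinely the substantive step, and you are right to point to \textbf{\cite{Panyushev1}} for it; the orbit-bijection description you sketch is how that verification runs.  With that granted, your induction is complete and matches the intended argument.
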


\begin{remark}
While we previously found it convenient to order the vertices of an orbit meander from right to left, to ease notation in the following proof, we will find it convenient to relabel the vertices going from left to right.  That is, if an orbit meander is defined by vertices
$\{v_n^+, v_{n-1}^+, ..., v_1^+\}$ and 
$\{v_n^-, v_{n-1}^-, ..., v_1^-\}$, redefine the orbit meander by 

$$\{v_n^+, v_{n-1}^+, ..., v_1^+\} \mapsto \{w_1^+, w_{2}^+, ..., w_n^+\}$$
$$\{v_n^-, v_{n-1}^-, ..., v_1^-\} \mapsto \{w_1^-, w_{2}^-, ..., w_n^-\}$$
\noindent 
This is done so that the induction is done on the block whose leftmost vertex is labeled $w_1^+$ rather than $v_n^+$.  
\end{remark}

\begin{theorem}\label{thm:unbroken}
If $\mathcal{M}(\pi_1\dd\pi_2)=\mathcal{M}(\ul{a}\dd\ul{b})$ is any Frobenius orbit meander, 
then
$\mathcal{E}(\sigma)$ is unbroken for every maximally connected component $\sigma$. 
\end{theorem}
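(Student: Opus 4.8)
The plan is to induct on the size of the orbit meander, using the Winding-up Lemma (Lemma \ref{Expansion}) to build up any Frobenius type-B or type-C orbit meander from the base case $\mathcal{M}_q(1^q \dd \emptyset)$, where the claim is trivial since every component is a single vertex with eigenvalue multiset either $\{1\}$ (a type-A singleton) or the type-C/type-B multiset from Table \ref{tab:simple eigenvalue}, all unbroken. Since $\mathcal{E}(\sigma)$ for a type-A component depends only on the isomorphism type of $\sigma$, and we may invoke the type-A result of \cite{Coll typea} (which guarantees $\mathcal{E}(\sigma)$ is unbroken for every type-A component that ever appears), the real content is to track the \emph{one} type-B or type-C component through the four moves and through any Flip-Up, showing it stays unbroken. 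So first I would reduce to: for each move in Lemma \ref{Expansion}, express the new special component $\sigma'$ in terms of the old special component $\sigma$ together with the glued-on type-A blocks, and show unbrokenness is preserved.

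\textbf{Key steps.} First, handle Flip-Up: it merely swaps $\pi_1$ and $\pi_2$, which by the "replace $\alpha_i \mapsto -\alpha_i$" rule of Lemma \ref{eigenvalue orbit} replaces $\mathcal{E}(\sigma)$ by $-\mathcal{E}(\sigma)$ for each component; since $\mathcal{E}(\sigma)$ is symmetric about one-half by Theorem \ref{thm:symmetry}, $-\mathcal{E}(\sigma)$ is a shift of $\mathcal{E}(\sigma)$ and in particular unbroken, so Flip-Up is harmless. Second, for Block Creation, Rotation Expansion, and Pure Expansion, the special component grows by absorbing the leftmost part(s), and the new simple eigenvalues on the absorbed vertices are forced by Lemma \ref{eigenvalue orbit} and Table \ref{tab:simple eigenvalue}; here I would use the corollary to Theorem \ref{thm:symmetry} — that adjoining a value symmetric to some eigenvalue of an unbroken $\mathcal{E}(\sigma)$, or adjoining $-\mathcal{E}(\sigma)$ wholesale, keeps things unbroken — to propagate the property. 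The main technical work is bookkeeping: writing out, for each of the three expansion moves, exactly which positive roots of the new component $\sigma'$ arise, splitting them into those lying entirely in the old special part, those lying entirely in a new type-A block (covered by \cite{Coll typea}), and the "bridging" roots that cross from the type-A block into the special part; for the bridging roots one shows the running sum of simple eigenvalues hits every integer between the extreme values, using Theorem \ref{typeAsymmetricpositiveroot} ($\sum \alpha_i(\widehat F) = 1$ over a type-A block) and Lemma \ref{lem:simple} (simple eigenvalues have absolute value at most $3$, so consecutive partial sums change by a bounded amount) to prevent gaps. I would organize this as three cases (one per expansion move), and within each, treat type B and type C in parallel since the special-component eigenvalue tables differ only in the exceptional root's value.

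\textbf{Main obstacle.} The hard part will be the bridging roots in the expansion moves, specifically controlling the partial sums of simple eigenvalues along a path that starts in a type-A block (where eigenvalues can be as large as $3$ or as negative as $-2$ near a U-turn) and then passes through the U-turn into the special component (where, by Table \ref{tab:ABCsimple eigenvalue}, the values are confined to $\{-1,0,1\}$ for type B and $\{0,1\}$ for type C). One must verify no integer is skipped in the multiset of all such partial sums — i.e., that the "jump" of size up to $3$ coming from a single large simple eigenvalue in the type-A part is always compensated by other roots realizing the intermediate values. I expect this to follow from the symmetric-root bijection of equation (\ref{SymmPos}) together with the explicit multiplicity counts computed in the proof of Theorem \ref{thm:symmetry}, but pinning it down move-by-move is where the careful casework lives; a clean way to finish is to show that after each move, $\mathcal{E}(\sigma')$ is a union of $\mathcal{E}(\sigma)$, some $\pm\mathcal{E}(\text{type-A block})$'s, and a set of values each symmetric to something already present, then invoke the Corollary to Theorem \ref{thm:symmetry}.
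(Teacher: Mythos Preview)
Your inductive framework is correct---build up via the Winding-up moves of Lemma~\ref{Expansion} from the base $\mathcal{M}_q(1^q\dd\emptyset)$---and your closing sentence about decomposing $\mathcal{E}(\sigma')$ into old pieces, their negatives, and symmetric leftovers is exactly the mechanism the paper uses. But the plan misidentifies \emph{which} components carry the inductive burden, and this leads to a genuine gap.

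First, the claim that ``$\mathcal{E}(\sigma)$ for a type-A component depends only on the isomorphism type of $\sigma$'' is false. The simple eigenvalues $\alpha_i(\widehat{F})$ on a type-A component living inside a type-B/C seaweed are determined by the $\langle i_1 i_2\rangle$-orbits of the \emph{entire} seaweed (via Lemma~\ref{eigenvalue orbit}), not by the component in isolation; see Figure~\ref{fig:simple eigenvalues}, where the type-A component $\{\alpha_8,\alpha_7,\alpha_6\}$ has simple eigenvalues $-2,1,2$. So you cannot discharge the type-A components by citing \cite{Coll typea}.

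Second, and relatedly, the type-B/C component does \emph{not} grow under the moves. The exceptional root $\alpha_1$ sits on the right, while Block Creation, Rotation Expansion, and Pure Expansion all act on the leftmost parts $a_1,a_2,b_1$; the type-B/C component has size $q$ at the base and keeps size $q$ forever. It is handled once, at the base case, by Theorem~\ref{thm:symmetry}, and then never changes. There are no ``bridging roots'' crossing from a type-A block into the special component, because maximally connected components are by definition disjoint.

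What the paper actually does in the inductive step is the reverse of your plan: the only components in $\mathcal{M}'$ not already present in $\mathcal{M}$ are the \emph{new type-A} components $\sigma_1$ (the enlarged leftmost top block) and $\sigma_2$ (the new leftmost bottom block), and the work is to show $\mathcal{E}(\sigma_1)$ and $\mathcal{E}(\sigma_2)$ are unbroken. This is done by relating them to $\mathcal{E}$ of the old leftmost blocks (e.g., $\mathcal{E}(A')=\mathcal{E}(A)$, $\mathcal{E}(B')=-\mathcal{E}(A)$ in Block Creation), which are unbroken by the inductive hypothesis, and then arguing that the remaining eigenvalues in $\mathcal{E}(\sigma_1)$ are symmetric to ones already shown to be present---exactly the Corollary-to-Theorem-\ref{thm:symmetry} maneuver you described, but applied to the type-A side. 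Pure Expansion is the delicate case, requiring the bound $|\alpha_i(\widehat{F})|\le 3$ from Lemma~\ref{lem:simple}.
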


\begin{proof} 
The proof is by  
induction on the number of Winding-up moves from Lemma \ref{Expansion} and
that $\mathcal{E}(\sigma)$ is unbroken for every maximally connected component $\sigma$.
Since $\mathcal{E}(\sigma)$ always contains $0$, Theorem \ref{thm:unbroken} implies the unbroken property of
Theorem \ref{thm:main}.

The base of the induction is an orbit meander $\mathcal{M}_q(1^q\dd \emptyset)$
for either type B or C where $q$ is a positive integer.
There is one maximally connected component $\sigma$, of type B or type C, and by Theorem \ref{thm:symmetry}, $\mathcal{E}(\sigma)$ is unbroken either way.  

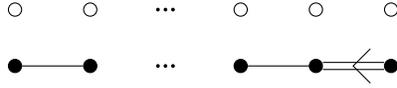
\begin{figure}[H]
\[\begin{tikzpicture}
[decoration={markings,mark=at position 0.6 with 
{\arrow{angle 90}{>}}}]

\draw (3,-.75) node[draw,circle,fill=black,minimum size=5pt,inner sep=0pt] (3+) {};
\draw (4,-.75) node[draw,circle,fill=black,minimum size=5pt,inner sep=0pt] (4+) {};
\draw (4.9,-.75) node[draw,circle,fill=black,minimum size=1pt,inner sep=0pt] (4.9+) {};
\draw (5,-.75) node[draw,circle,fill=black,minimum size=1pt,inner sep=0pt] (5+) {};
\draw (5.1,-.75) node[draw,circle,fill=black,minimum size=1pt,inner sep=0pt] (5.1+) {};
\draw (6,-.75) node[draw,circle,fill=black,minimum size=5pt,inner sep=0pt] (6+) {};
\draw (7,-.75) node[draw,circle,fill=black,minimum size=5pt,inner sep=0pt] (7+) {};
\draw (8,-.75) node[draw,circle,fill=black,minimum size=5pt,inner sep=0pt] (8+) {};

\draw (3,0) node[draw,circle,fill=white,minimum size=5pt,inner sep=0pt] (3-) {};
\draw (4,0) node[draw,circle,fill=white,minimum size=5pt,inner sep=0pt] (4-) {};
\draw (4.9,0) node[draw,circle,fill=black,minimum size=1pt,inner sep=0pt] (4.9-) {};
\draw (5,0) node[draw,circle,fill=black,minimum size=1pt,inner sep=0pt] (5-) {};
\draw (5.1,0) node[draw,circle,fill=black,minimum size=1pt,inner sep=0pt] (5.1-) {};
\draw (6,0) node[draw,circle,fill=white,minimum size=5pt,inner sep=0pt] (6-) {};
\draw (7,0) node[draw,circle,fill=white,minimum size=5pt,inner sep=0pt] (7-) {};
\draw (8,0) node[draw,circle,fill=white,minimum size=5pt,inner sep=0pt] (8-) {};

\draw (3+) to (4+);
\draw (6+) to (7+);
\draw [double distance=.8mm,postaction={decorate}] (8+) to (7+);

;\end{tikzpicture}\]
\caption{The Frobenius orbit meander $\mathcal{M}_q^\C(1^q\dd \emptyset)$}
\label{typeCbase}
\end{figure}

Let $\mathcal{M} = \mathcal{M}(\ul{a}\dd\ul{b})$ be a type-B or type-C orbit meander.  Suppose $\mathcal{E}(\sigma)$ is unbroken for each maximally connected component $\sigma$ in $\mathcal{M}$.  Let $\mathcal{M}'$ be the orbit meander resulting from applying one of the Winding-up moves from Lemma \ref{Expansion} to $\mathcal{M}$.  
For the inductive step, we need not consider the Flip-up move since this merely replaces $\mathcal{M}$ with an inverted isomorphic copy and consequently has no effect on the eigenvalue calculations. 
So, there are three cases we need to consider: block creation, rotation expansion, and pure expansion.  
We show $\mathcal{E}(\sigma)$ is unbroken for each maximally connected component $\sigma$ in $\mathcal{M}'$ that is not in $\mathcal{M}$.  
The following sets of vertices will assist in the computation of the eigenvalues.  
Note that the ordering of the sets in columns three and four of the following table are set up so that the vertices in the sets are in the order of the inducted-upon orbit meander.  

\newpage  

\begin{table}[H]
\begin{center}
\scalebox{0.83}{
\begin{tabular}{|c|c|c|c|}
\hline
\textbf{Move} & \textbf{Base} $\mathcal{M}$ & \textbf{New} $\mathcal{M}'$ & \textbf{New $\sigma$'s} \\
\hline
\hline
$\begin{array}{cc}
     \textbf{Block}  \\
     \textbf{Creation} 
\end{array}$ 
& 
$\begin{array}{l}
     A = \{w_1^+, w_{2}^+, ..., w_{a_1-1}^+\}  
\end{array}$ 
&
$\begin{array}{llll}
     B' = \{w_1^+, w_2^+, ..., w_{a_1-1}^+\}  \\
     C' = \{w_{a_1}^+\}  \\
     A' = \{w_{a_1+1}^+, w_{a_1+2}^+, ..., w_{2a_1-1}^+\} \\
     D' = \{w_1^-, w_2^-, ..., w_{a_1-1}^-\}  
\end{array}$  
&
$\begin{array}{ll}
     \sigma_1 = B' \cup C' \cup A'  \\ 
     \sigma_2 = D'  
\end{array}$ \\ 
\hline
$\begin{array}{cc}
     \textbf{Rotation}  \\
     \textbf{Expansion} 
\end{array}$ 
& 
$\begin{array}{lll}
     A = \{w_1^+, w_2^+, ..., w_{b_1-1}^+\}  \\ 
     B = \{w_{b_1}^+, w_{b_1+1}^+, ..., w_{a_1-1}^+\} \\
     C = \{w_1^-, w_2^-, ..., w_{b_1-1}^-\} 
\end{array}$
&
$\begin{array}{llll}
     A' = \{w_1^+, w_2^+, ..., w_{a_1-b_1}^+\}  \\
     C' = \{w_{a_1-b_1+1}^+, w_{a_1-b_1+2}^+, ..., w_{a_1-1}^+\}  \\
     B' = \{w_{a_1}^+, w_{a_1+1}^+, ..., w_{2a_1-b_1-1}^+\} \\
     D'= \{w_1^-, w_2^-, ...,w_{a_1-1}^-\} 
\end{array}$  
&
$\begin{array}{ll}
     \sigma_1 = A' \cup C' \cup B'  \\
     \sigma_2 = D'
\end{array}$ \\
\hline
$\begin{array}{cc}
     \textbf{Pure}  \\
     \textbf{Expansion} 
\end{array}$ & 
$\begin{array}{ll}
     A = \{w_1^+, w_2^+, ..., w_{a_1-1}^+\}  \\ 
     B = \{w_{a_1+1}^+, w_{a_1+2}^+, ..., w_{a_1+a_2-1}^+\}
\end{array}$
&
$\begin{array}{lll}
     B' = \{w_1^+, w_2^+, ..., w_{a_2-1}^+\} \\
     E' = \{w_{a_2}^+\} \\
     A' = \{w_{a_2+1}^+, w_{a_2+2}^+, ..., w_{a_1+a_2-1}\}  \\
     F' = \{w_{a_1+a_2}^+\} \\
     C' = \{w_{a_1+a_2+1}^+, w_{a_1+a_2+2}^+, ..., w_{a_1+2a_2-1}^+\} \\
     D' = \{w_1^-, w_2^-, ..., w_{a_2-1}^-\}
\end{array}$  
&
$\begin{array}{ll}
     \sigma_1 = B' \cup E' \cup A' \cup F' \cup C'  \\
     \sigma_2 = D' 
\end{array}$ \\
\hline
\end{tabular}
}
\caption{Sets in the Induction Proof}
\label{tab:induction}
\end{center}
\end{table}

%We include a diagram for the Block Creation case.  The reader will have no difficulty using Table \ref{tab:induction} to create similar diagrams for the other moves.  

Gray vertices in the orbit meanders associated to each Winding up move are vertices not impacted by the induction.  Furthermore, for brevity, the orbit meanders do not extend beyond the relevant top blocks.  

\setcounter{case}{0}
\begin{case}\label{unbroken1}
Block Creation:
\end{case}

\begin{figure}[H]
\[\begin{tikzpicture}
[decoration={markings,mark=at position 0.6 with 
{\arrow{angle 90}{>}}}]

\draw (.5,1) -- (4.5,1);
\draw (.5,.25) -- (4.5,.25);
\draw (.5,.25) -- (.5,1);
\draw (4.5,.25) -- (4.5,1);

\draw (1,.75) node[draw,circle,fill=black,minimum size=5pt,inner sep=0pt] (1+) {};
\draw (2,.75) node[draw,circle,fill=black,minimum size=5pt,inner sep=0pt] (2+) {};
\draw (3,.75) node[draw,circle,fill=black,minimum size=5pt,inner sep=0pt] (3+) {};
\draw (4,.75) node[draw,circle,fill=black,minimum size=5pt,inner sep=0pt] (4+) {};
\draw (5,.75) node[draw,circle,fill=white,minimum size=5pt,inner sep=0pt] (5+) {};

\draw (1,0) node[draw,circle,fill=gray,minimum size=5pt,inner sep=0pt] (1-) {};
\draw (2,0) node[draw,circle,fill=gray,minimum size=5pt,inner sep=0pt] (2-) {};
\draw (3,0) node[draw,circle,fill=gray,minimum size=5pt,inner sep=0pt] (3-) {};
\draw (4,0) node[draw,circle,fill=gray,minimum size=5pt,inner sep=0pt] (4-) {};
\draw (5,0) node[draw,circle,fill=gray,minimum size=5pt,inner sep=0pt] (5-) {};

\draw (1+) to (2+);
\draw (2+) to (3+);
\draw (3+) to (4+);

\draw [dashed] (1+) to [bend left=60] (4+);
\draw [dashed] (2+) to [bend left=60] (3+);

\node at (.5,1.25) {$A$};
\node at (1,.5) {$w_1^+$};
\node at (2,.5) {$w_2^+$};
\node at (3,.5) {$w_3^+$};
\node at (4,.5) {$w_4^+$};

;\end{tikzpicture}
\hspace{1cm}
\begin{tikzpicture}
[decoration={markings,mark=at position 0.6 with 
{\arrow{angle 90}{>}}}]

\draw (.5,1.5) -- (4.5,1.5);
\draw (.5,.75) -- (4.5,.75);
\draw (.5,.75) -- (.5,1.5);
\draw (4.5,.75) -- (4.5,1.5);

\draw (5.5,1.5) -- (9.5,1.5);
\draw (5.5,.75) -- (9.5,.75);
\draw (5.5,.75) -- (5.5,1.5);
\draw (9.5,.75) -- (9.5,1.5);

\draw (4.6,1.5) -- (5.4,1.5);
\draw (4.6,.75) -- (5.4,.75);
\draw (4.6,.75) -- (4.6,1.5);
\draw (5.4,.75) -- (5.4,1.5);

\draw (.5,-.25) -- (4.5,-.25);
\draw (.5,.6) -- (4.5,.6);
\draw (.5,.6) -- (.5,-.25);
\draw (4.5,.6) -- (4.5,-.25);

\draw (1,1.25) node[draw,circle,fill=black,minimum size=5pt,inner sep=0pt] (1+) {};
\draw (2,1.25) node[draw,circle,fill=black,minimum size=5pt,inner sep=0pt] (2+) {};
\draw (3,1.25) node[draw,circle,fill=black,minimum size=5pt,inner sep=0pt] (3+) {};
\draw (4,1.25) node[draw,circle,fill=black,minimum size=5pt,inner sep=0pt] (4+) {};
\draw (5,1.25) node[draw,circle,fill=black,minimum size=5pt,inner sep=0pt] (5+) {};
\draw (6,1.25) node[draw,circle,fill=black,minimum size=5pt,inner sep=0pt] (6+) {};
\draw (7,1.25) node[draw,circle,fill=black,minimum size=5pt,inner sep=0pt] (7+) {};
\draw (8,1.25) node[draw,circle,fill=black,minimum size=5pt,inner sep=0pt] (8+) {};
\draw (9,1.25) node[draw,circle,fill=black,minimum size=5pt,inner sep=0pt] (9+) {};
\draw (10,1.25) node[draw,circle,fill=white,minimum size=5pt,inner sep=0pt] (10+) {};

\draw (1,0) node[draw,circle,fill=black,minimum size=5pt,inner sep=0pt] (1-) {};
\draw (2,0) node[draw,circle,fill=black,minimum size=5pt,inner sep=0pt] (2-) {};
\draw (3,0) node[draw,circle,fill=black,minimum size=5pt,inner sep=0pt] (3-) {};
\draw (4,0) node[draw,circle,fill=black,minimum size=5pt,inner sep=0pt] (4-) {};
\draw (5,0) node[draw,circle,fill=white,minimum size=5pt,inner sep=0pt] (5-) {};
\draw (6,0) node[draw,circle,fill=gray,minimum size=5pt,inner sep=0pt] (6-) {};
\draw (7,0) node[draw,circle,fill=gray,minimum size=5pt,inner sep=0pt] (7-) {};
\draw (8,0) node[draw,circle,fill=gray,minimum size=5pt,inner sep=0pt] (8-) {};
\draw (9,0) node[draw,circle,fill=gray,minimum size=5pt,inner sep=0pt] (9-) {};
\draw (10,0) node[draw,circle,fill=gray,minimum size=5pt,inner sep=0pt] (10-) {};

\draw (1+) to (2+);
\draw (2+) to (3+);
\draw (3+) to (4+);
\draw (4+) to (5+);
\draw (5+) to (6+);
\draw (6+) to (7+);
\draw (7+) to (8+);
\draw (8+) to (9+);
\draw (1-) to (2-);
\draw (2-) to (3-);
\draw (3-) to (4-);

\draw [dashed] (1+) to [bend left=60] (9+);
\draw [dashed] (2+) to [bend left=60] (8+);
\draw [dashed] (3+) to [bend left=60] (7+);
\draw [dashed] (4+) to [bend left=90] (6+);
\draw [dashed] (1-) to [bend right=60] (4-);
\draw [dashed] (2-) to [bend right=60] (3-);

\node at (.5,-.5) {$D'$};
\node at (.5,1.75) {$B'$};
\node at (5,1.75) {$C'$};
\node at (9.5,1.75) {$A'$};
\node at (1,1) {$w_1^+$};
\node at (2,1) {$w_2^+$};
\node at (3,1) {$w_3^+$};
\node at (4,1) {$w_4^+$};
\node at (5,1) {$w_5^+$};
\node at (6,1) {$w_6^+$};
\node at (7,1) {$w_7^+$};
\node at (8,1) {$w_8^+$};
\node at (9,1) {$w_9^+$};
\node at (1,.35) {$w_1^-$};
\node at (2,.35) {$w_2^-$};
\node at (3,.35) {$w_3^-$};
\node at (4,.35) {$w_4^-$};

;\end{tikzpicture}
\]
\caption{Block Creation applied to $\mathcal{M}$ with $a_1 = 5$ (left) to obtain $\mathcal{M}'$ (right)}
\label{fig:InductionBlockCreation}
\end{figure}
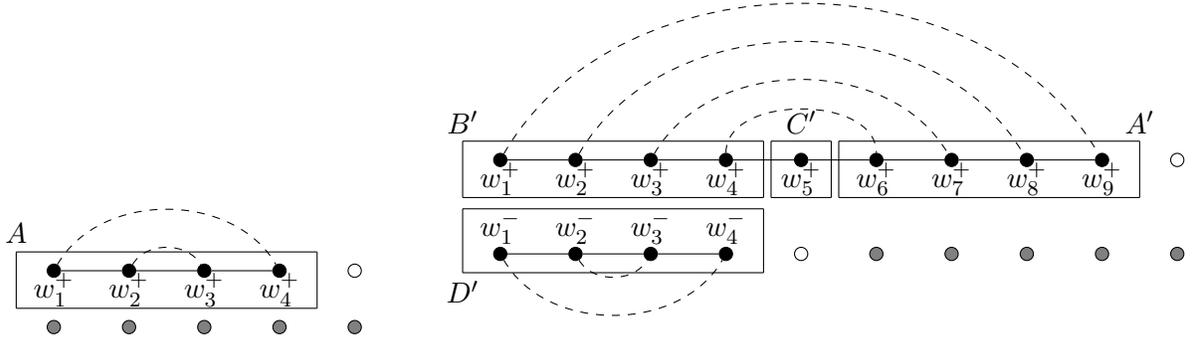

We have  

%Let $A$ be the first $a_1-1$ vertices on the top of $\mathcal{M}$, namely $A = \{v_1^+, v_2^+, ..., v_{a_1-1}^+\}$.  Let $B'$ be the first $a_1-1$ vertices on the top of $\mathcal{M}'$, namely $B' = \{v_1^+, v_2^+, ..., v_{a_1-1}^+\}$, let $C' = \{v_{a_1}^+\}$ in $\mathcal{M}'$, and let $A'$ be the next $a_1-1$ vertices on the top of $\mathcal{M}'$, namely $A' = \{v_{a_1+1}^+, v_{a_1+2}^+, ..., v_{2a_1-1}^+\}$.  
\noindent 
%Let the eigenvalues contributed by a set of vertices $A$ be denoted $\mathcal{E}(A)$.
\begin{eqnarray}
\label{ind1.1} \mathcal{E}(A') &=& \mathcal{E}(A),\\
\label{ind1.2} \mathcal{E}(B') &=& -\mathcal{E}(A), \\
\label{ind1.3} \mathcal{E}(D') &=& \mathcal{E}(A).
\end{eqnarray} 
Note that $\mathcal{E}(A)$ is unbroken by induction.  So, by Equation (\ref{ind1.3}), $\mathcal{E}(\sigma_2)$ is unbroken.  
%and symmetric about one-half by Theorem \ref{thm:symmetry}.
Any number in 
$\mathcal{E}(A' \cup C')$
is in either $\mathcal{E}(A')$ or $\mathcal{E}(A')+1$, so by equation (\ref{ind1.1}), $\mathcal{E}(A' \cup C')$ is unbroken.  By symmetry and equation (\ref{ind1.2}), 
$\mathcal{E}(B' \cup C')$ is unbroken.  Hence
$\mathcal{E}(B' \cup C') \cup \mathcal{E}(A' \cup C')$ is unbroken.  
Any remaining eigenvalue in $\mathcal{E}(\sigma_1)$ is either one or symmetric to an eigenvalue in $\mathcal{E}(B' \cup C') \cup \mathcal{E}(A' \cup C')$,  
%A symmetric eigenvalue may extend the range by $1$, but this does not break the set.  
so $\mathcal{E}(\sigma_1)$ is unbroken.  

%Any other eigenvalue is symmetric with respect to an eigenvalue in $A'$ or $B'$, and, by symmetry, adding these may extend the range of the spectrum by $1$, but will not break it.  

\begin{case}\label{unbroken2}
Rotation Expansion:
\end{case}

\begin{figure}[H]
\[\begin{tikzpicture}
[decoration={markings,mark=at position 0.6 with 
{\arrow{angle 90}{>}}}]

\draw (.5,1.5) -- (2.5,1.5);
\draw (.5,.75) -- (2.5,.75);
\draw (.5,.75) -- (.5,1.5);
\draw (2.5,.75) -- (2.5,1.5);

\draw (2.6,1.5) -- (4.5,1.5);
\draw (2.6,.75) -- (4.5,.75);
\draw (2.6,.75) -- (2.6,1.5);
\draw (4.5,.75) -- (4.5,1.5);

\draw (.5,-.25) -- (2.5,-.25);
\draw (.5,.6) -- (2.5,.6);
\draw (.5,.6) -- (.5,-.25);
\draw (2.5,.6) -- (2.5,-.25);

\draw (1,1.25) node[draw,circle,fill=black,minimum size=5pt,inner sep=0pt] (1+) {};
\draw (2,1.25) node[draw,circle,fill=black,minimum size=5pt,inner sep=0pt] (2+) {};
\draw (3,1.25) node[draw,circle,fill=black,minimum size=5pt,inner sep=0pt] (3+) {};
\draw (4,1.25) node[draw,circle,fill=black,minimum size=5pt,inner sep=0pt] (4+) {};
\draw (5,1.25) node[draw,circle,fill=white,minimum size=5pt,inner sep=0pt] (5+) {};

\draw (1,0) node[draw,circle,fill=black,minimum size=5pt,inner sep=0pt] (1-) {};
\draw (2,0) node[draw,circle,fill=black,minimum size=5pt,inner sep=0pt] (2-) {};
\draw (3,0) node[draw,circle,fill=white,minimum size=5pt,inner sep=0pt] (3-) {};
\draw (4,0) node[draw,circle,fill=gray,minimum size=5pt,inner sep=0pt] (4-) {};
\draw (5,0) node[draw,circle,fill=gray,minimum size=5pt,inner sep=0pt] (5-) {};

\draw (1+) to (2+);
\draw (2+) to (3+);
\draw (3+) to (4+);

\draw [dashed] (1+) to [bend left=60] (4+);
\draw [dashed] (2+) to [bend left=60] (3+);
\draw [dashed] (1-) to [bend right=60] (2-);

\node at (.5,1.75) {$A$};
\node at (4.5,1.75) {$B$};
\node at (.5,-.5) {$C$};
\node at (1,1) {$w_1^+$};
\node at (2,1) {$w_2^+$};
\node at (3,1) {$w_3^+$};
\node at (4,1) {$w_4^+$};
\node at (1,.35) {$w_1^-$};
\node at (2,.35) {$w_2^-$};

;\end{tikzpicture}
\hspace{1cm}
\begin{tikzpicture}
[decoration={markings,mark=at position 0.6 with 
{\arrow{angle 90}{>}}}]

\draw (.5,1.5) -- (2.5,1.5);
\draw (.5,.75) -- (2.5,.75);
\draw (.5,.75) -- (.5,1.5);
\draw (2.5,.75) -- (2.5,1.5);

\draw (4.5,1.5) -- (6.5,1.5);
\draw (4.5,.75) -- (6.5,.75);
\draw (4.5,.75) -- (4.5,1.5);
\draw (6.5,.75) -- (6.5,1.5);

\draw (2.6,1.5) -- (4.4,1.5);
\draw (2.6,.75) -- (4.4,.75);
\draw (2.6,.75) -- (2.6,1.5);
\draw (4.4,.75) -- (4.4,1.5);

\draw (.5,-.25) -- (4.5,-.25);
\draw (.5,.6) -- (4.5,.6);
\draw (.5,.6) -- (.5,-.25);
\draw (4.5,.6) -- (4.5,-.25);

\draw (1,1.25) node[draw,circle,fill=black,minimum size=5pt,inner sep=0pt] (1+) {};
\draw (2,1.25) node[draw,circle,fill=black,minimum size=5pt,inner sep=0pt] (2+) {};
\draw (3,1.25) node[draw,circle,fill=black,minimum size=5pt,inner sep=0pt] (3+) {};
\draw (4,1.25) node[draw,circle,fill=black,minimum size=5pt,inner sep=0pt] (4+) {};
\draw (5,1.25) node[draw,circle,fill=black,minimum size=5pt,inner sep=0pt] (5+) {};
\draw (6,1.25) node[draw,circle,fill=black,minimum size=5pt,inner sep=0pt] (6+) {};
\draw (7,1.25) node[draw,circle,fill=white,minimum size=5pt,inner sep=0pt] (7+) {};

\draw (1,0) node[draw,circle,fill=black,minimum size=5pt,inner sep=0pt] (1-) {};
\draw (2,0) node[draw,circle,fill=black,minimum size=5pt,inner sep=0pt] (2-) {};
\draw (3,0) node[draw,circle,fill=black,minimum size=5pt,inner sep=0pt] (3-) {};
\draw (4,0) node[draw,circle,fill=black,minimum size=5pt,inner sep=0pt] (4-) {};
\draw (5,0) node[draw,circle,fill=white,minimum size=5pt,inner sep=0pt] (5-) {};
\draw (6,0) node[draw,circle,fill=gray,minimum size=5pt,inner sep=0pt] (6-) {};
\draw (7,0) node[draw,circle,fill=gray,minimum size=5pt,inner sep=0pt] (7-) {};

\draw (1+) to (6+);
\draw (1-) to (4-);

\draw [dashed] (1+) to [bend left=60] (6+);
\draw [dashed] (2+) to [bend left=60] (5+);
\draw [dashed] (3+) to [bend left=60] (4+);
\draw [dashed] (1-) to [bend right=60] (4-);
\draw [dashed] (2-) to [bend right=60] (3-);

\node at (.5,-.5) {$D'$};
\node at (.5,1.75) {$A'$};
\node at (3.5,1.75) {$C'$};
\node at (6.5,1.75) {$B'$};
\node at (1,1) {$w_1^+$};
\node at (2,1) {$w_2^+$};
\node at (3,1) {$w_3^+$};
\node at (4,1) {$w_4^+$};
\node at (5,1) {$w_5^+$};
\node at (6,1) {$w_6^+$};
\node at (1,.35) {$w_1^-$};
\node at (2,.35) {$w_2^-$};
\node at (3,.35) {$w_3^-$};
\node at (4,.35) {$w_4^-$};

;\end{tikzpicture}
\]
\caption{Rotation Expansion applied to $\mathcal{M}$ with $a_1 = 5$ and $b_1=3$ (left) to obtain $\mathcal{M}'$ (right)}
\label{fig:InductionRotationExpansion}
\end{figure}
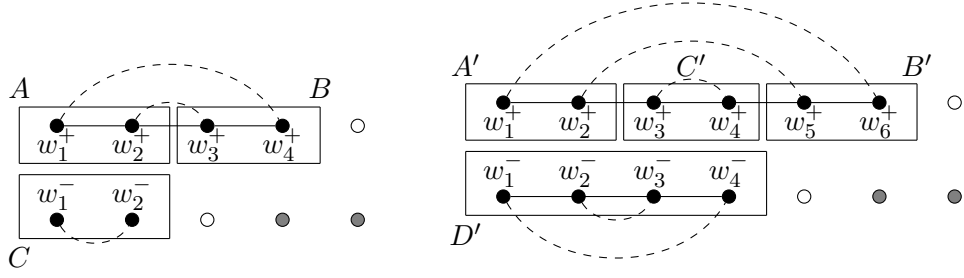

We have 
\begin{eqnarray}
\label{ind2.1} \mathcal{E}(D') &=& \mathcal{E}(A \cup B),\\
\label{ind2.2} \mathcal{E}(C') &=& -\mathcal{E}(C). 
\end{eqnarray} 

%Let $A$ be the first $b_1-1$ vertices on the top of $\mathcal{M}$, namely 
%$A = \{v_1^+, v_2^+, ..., v_{b_1-1}^+\}$.  Let $B$ be the next $a_1-b_1$ vertices on the top of $\mathcal{M}$, namely 
%$B = \{v_{b_1}^+, v_{b_1+1}^+, ..., v_{a_1-1}^+\}$.  Let $A'$ be the first $a_1-b_1$ vertices on the top of $\mathcal{M}'$, namely 
%$A' = \{v_1^+, v_2^+, ..., v_{a_1-b_1}^+\}$.  Let $B'$ be the next $b_1-1$ vertices on the top of $\mathcal{M}'$, namely $B' = \{v_{a_1-b_1+1}^+, v_{a_1-b_1+2}^+, ..., v_{a_1-1}^+\}$.  Let $C'$ be next $a_1-b_1$ vertices on the top of $\mathcal{M}'$, namely $C' = \{v_{a_1}^+, v_{a_1+1}^+, ..., v_{2a_1-b_1-1}^+\}$.  Let $D'$ be the first $a_1-1$ vertices on the bottom of $\mathcal{M}'$, namely $D'=\{v_1^-, v_2^-, ...,v_{a_1-1}^-\}$.  

\noindent 
By equation (\ref{ind2.1}), $\mathcal{E}(\sigma_2)$ is unbroken.  Without loss of generality, 
\begin{eqnarray}
\label{ind2.3} \mathcal{E}(A' \cup C') &=& -\mathcal{E}(A \cup B),
\end{eqnarray} 
so $\mathcal{E}(A' \cup C') \subseteq \mathcal{E}(\sigma_1)$ is unbroken.  All other eigenvalues in $\mathcal{E}(\sigma_1)$ are symmetric to eigenvalues in $\mathcal{E}(A' \cup C')$, and consequently, $\mathcal{E}(\sigma_1)$ is unbroken.  

\begin{case}\label{unbroken3}
Pure Expansion:
\end{case}

\begin{figure}[H]
\[\begin{tikzpicture}
[decoration={markings,mark=at position 0.6 with 
{\arrow{angle 90}{>}}}]

\draw (.5,1) -- (3.5,1);
\draw (.5,.25) -- (3.5,.25);
\draw (.5,.25) -- (.5,1);
\draw (3.5,.25) -- (3.5,1);

\draw (4.5,1) -- (6.5,1);
\draw (4.5,.25) -- (6.5,.25);
\draw (4.5,.25) -- (4.5,1);
\draw (6.5,.25) -- (6.5,1);

\draw (1,.75) node[draw,circle,fill=black,minimum size=5pt,inner sep=0pt] (1+) {};
\draw (2,.75) node[draw,circle,fill=black,minimum size=5pt,inner sep=0pt] (2+) {};
\draw (3,.75) node[draw,circle,fill=black,minimum size=5pt,inner sep=0pt] (3+) {};
\draw (4,.75) node[draw,circle,fill=white,minimum size=5pt,inner sep=0pt] (4+) {};
\draw (5,.75) node[draw,circle,fill=black,minimum size=5pt,inner sep=0pt] (5+) {};
\draw (6,.75) node[draw,circle,fill=black,minimum size=5pt,inner sep=0pt] (6+) {};
\draw (7,.75) node[draw,circle,fill=white,minimum size=5pt,inner sep=0pt] (7+) {};

\draw (1,0) node[draw,circle,fill=gray,minimum size=5pt,inner sep=0pt] (1-) {};
\draw (2,0) node[draw,circle,fill=gray,minimum size=5pt,inner sep=0pt] (2-) {};
\draw (3,0) node[draw,circle,fill=gray,minimum size=5pt,inner sep=0pt] (3-) {};
\draw (4,0) node[draw,circle,fill=gray,minimum size=5pt,inner sep=0pt] (4-) {};
\draw (5,0) node[draw,circle,fill=gray,minimum size=5pt,inner sep=0pt] (5-) {};
\draw (6,0) node[draw,circle,fill=gray,minimum size=5pt,inner sep=0pt] (6-) {};
\draw (7,0) node[draw,circle,fill=gray,minimum size=5pt,inner sep=0pt] (7-) {};

\draw (1+) to (2+);
\draw (2+) to (3+);
\draw (5+) to (6+);

\draw [dashed] (1+) to [bend left=60] (3+);
\draw [dashed] (5+) to [bend left=60] (6+);

\node at (.5,1.25) {$A$};
\node at (4.5,1.25) {$B$};
\node at (1,.5) {$w_1^+$};
\node at (2,.5) {$w_2^+$};
\node at (3,.5) {$w_3^+$};
\node at (4,.5) {$w_4^+$};
\node at (5,.5) {$w_5^+$};
\node at (6,.5) {$w_6^+$};
\node at (7,.5) {$w_7^+$};

;\end{tikzpicture}\]

\[\begin{tikzpicture}
[decoration={markings,mark=at position 0.6 with 
{\arrow{angle 90}{>}}}]

\draw (.5,1.5) -- (2.5,1.5);
\draw (.5,.75) -- (2.5,.75);
\draw (.5,.75) -- (.5,1.5);
\draw (2.5,.75) -- (2.5,1.5);

\draw (2.6,1.5) -- (3.4,1.5);
\draw (2.6,.75) -- (3.4,.75);
\draw (2.6,.75) -- (2.6,1.5);
\draw (3.4,.75) -- (3.4,1.5);

\draw (3.5,1.5) -- (6.5,1.5);
\draw (3.5,.75) -- (6.5,.75);
\draw (3.5,.75) -- (3.5,1.5);
\draw (6.5,.75) -- (6.5,1.5);

\draw (6.6,1.5) -- (7.4,1.5);
\draw (6.6,.75) -- (7.4,.75);
\draw (6.6,.75) -- (6.6,1.5);
\draw (7.4,.75) -- (7.4,1.5);

\draw (7.5,1.5) -- (9.5,1.5);
\draw (7.5,.75) -- (9.5,.75);
\draw (7.5,.75) -- (7.5,1.5);
\draw (9.5,.75) -- (9.5,1.5);

\draw (.5,-.25) -- (2.5,-.25);
\draw (.5,.6) -- (2.5,.6);
\draw (.5,.6) -- (.5,-.25);
\draw (2.5,.6) -- (2.5,-.25);

\draw (1,1.25) node[draw,circle,fill=black,minimum size=5pt,inner sep=0pt] (1+) {};
\draw (2,1.25) node[draw,circle,fill=black,minimum size=5pt,inner sep=0pt] (2+) {};
\draw (3,1.25) node[draw,circle,fill=black,minimum size=5pt,inner sep=0pt] (3+) {};
\draw (4,1.25) node[draw,circle,fill=black,minimum size=5pt,inner sep=0pt] (4+) {};
\draw (5,1.25) node[draw,circle,fill=black,minimum size=5pt,inner sep=0pt] (5+) {};
\draw (6,1.25) node[draw,circle,fill=black,minimum size=5pt,inner sep=0pt] (6+) {};
\draw (7,1.25) node[draw,circle,fill=black,minimum size=5pt,inner sep=0pt] (7+) {};
\draw (8,1.25) node[draw,circle,fill=black,minimum size=5pt,inner sep=0pt] (8+) {};
\draw (9,1.25) node[draw,circle,fill=black,minimum size=5pt,inner sep=0pt] (9+) {};
\draw (10,1.25) node[draw,circle,fill=white,minimum size=5pt,inner sep=0pt] (10+) {};

\draw (1,0) node[draw,circle,fill=black,minimum size=5pt,inner sep=0pt] (1-) {};
\draw (2,0) node[draw,circle,fill=black,minimum size=5pt,inner sep=0pt] (2-) {};
\draw (3,0) node[draw,circle,fill=white,minimum size=5pt,inner sep=0pt] (3-) {};
\draw (4,0) node[draw,circle,fill=gray,minimum size=5pt,inner sep=0pt] (4-) {};
\draw (5,0) node[draw,circle,fill=gray,minimum size=5pt,inner sep=0pt] (5-) {};
\draw (6,0) node[draw,circle,fill=gray,minimum size=5pt,inner sep=0pt] (6-) {};
\draw (7,0) node[draw,circle,fill=gray,minimum size=5pt,inner sep=0pt] (7-) {};
\draw (8,0) node[draw,circle,fill=gray,minimum size=5pt,inner sep=0pt] (8-) {};
\draw (9,0) node[draw,circle,fill=gray,minimum size=5pt,inner sep=0pt] (9-) {};
\draw (10,0) node[draw,circle,fill=gray,minimum size=5pt,inner sep=0pt] (10-) {};

\draw (1+) to (2+);
\draw (2+) to (3+);
\draw (3+) to (4+);
\draw (4+) to (5+);
\draw (5+) to (6+);
\draw (6+) to (7+);
\draw (7+) to (8+);
\draw (8+) to (9+);
\draw (1-) to (2-);

\draw [dashed] (1+) to [bend left=60] (9+);
\draw [dashed] (2+) to [bend left=60] (8+);
\draw [dashed] (3+) to [bend left=60] (7+);
\draw [dashed] (4+) to [bend left=90] (6+);
\draw [dashed] (1-) to [bend right=60] (2-);

\node at (.5,-.5) {$D'$};
\node at (.5,1.75) {$B'$};
\node at (3,1.75) {$E'$};
\node at (5,1.75) {$A'$};
\node at (7,1.75) {$F'$};
\node at (9.5,1.75) {$C'$};
\node at (1,1) {$w_1^+$};
\node at (2,1) {$w_2^+$};
\node at (3,1) {$w_3^+$};
\node at (4,1) {$w_4^+$};
\node at (5,1) {$w_5^+$};
\node at (6,1) {$w_6^+$};
\node at (7,1) {$w_7^+$};
\node at (8,1) {$w_8^+$};
\node at (9,1) {$w_9^+$};
\node at (1,.35) {$w_1^-$};
\node at (2,.35) {$w_2^-$};

;\end{tikzpicture}
\]
\caption{Pure Expansion applied to $\mathcal{M}$ with $a_1 = 4$ and $a_2=3$ (top) to obtain $\mathcal{M}'$ (bottom)}
\label{fig:InductionPureExpansion}
\end{figure}
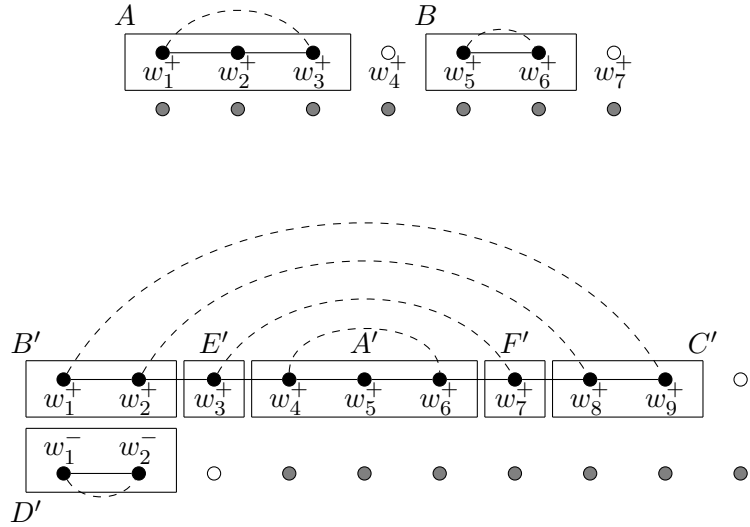

We have 

\begin{eqnarray}
\label{ind2.4} \mathcal{E}(B') &=& -\mathcal{E}(B), \\
\label{ind2.5} \mathcal{E}(A') &=& \mathcal{E}(A), \\
\label{ind2.6} \mathcal{E}(C') &=& \mathcal{E}(B), \\
\label{ind2.7} \mathcal{E}(D') &=& \mathcal{E}(B).  
\end{eqnarray} 
\noindent 
By equation (\ref{ind2.7}), $\mathcal{E}(\sigma_2)$ is unbroken.  
Let $\gamma = \alpha_{a_2}(\widehat{F})$ for $\alpha_{a_2} \in \pi_1$.  By Lemma \ref{lem:simple}, $\gamma = 1, 2,$ or $3$.  Since 
\[\alpha_{1}(\widehat{F})+ \alpha_{2}(\widehat{F}) + ... + \alpha_{a_2-1}(\widehat{F}) = -1,\] 
and $\gamma = 1, 2,$ or $3$, $\mathcal{E}(B' \cup E')$ contains $0, 1,$ or $2$.  In particular, 
\[\alpha_{1}(\widehat{F})+ \alpha_{2}(\widehat{F}) + ... + \alpha_{a_2-1}(\widehat{F}) + \alpha_{a_2}(\widehat{F})= \gamma-1.\] 
It is clear that if $a_2$ is even, then the eigenvalues in $\mathcal{E}(B' \cup E') \setminus \mathcal{E}(B')$ are 
\[\{ \gamma, 
\gamma + \alpha_{a_2-1}(\widehat{F}), 
\gamma + \alpha_{a_2-2}(\widehat{F}), ..., 
\gamma + \alpha_{\frac{a_2}{2}+1}(\widehat{F}), 
\gamma - 1, 
\gamma - 1 +  \alpha_{a_2-1}(\widehat{F}), 
\gamma - 1 + \alpha_{a_2-2}(\widehat{F}), ..., 
\gamma - 1 + \alpha_{\frac{a_2}{2}+1}(\widehat{F})
\}.\] 
By Lemma \ref{lem:simple}, $|\alpha_i(\widehat{F})| = 0, 1, 2,$ or $3$ for all $i$.  

If $|\alpha_i(\widehat{F})| \neq 3$ for some $i \in \{a_2 - 1, a_2 - 2, ..., \frac{a_2}{2}+1\}$, then $\mathcal{E}(B' \cup E') \setminus \mathcal{E}(B')$ is unbroken.  Moreover, since 
\[\alpha_{a_2+1}(\widehat{F})+ \alpha_{a_2+2}(\widehat{F}) + ... + \alpha_{a_1+a_2-1}(\widehat{F}) = 1,\] 
the multiset
\[\mathcal{E}(B' \cup E' \cup A') \setminus \left[\mathcal{E}(B' \cup E') ~\cup~ \mathcal{E}(A')\right]\] 
is unbroken. 
%(does not necessarily contain $0$)
But since $\{0, 1\} \subseteq \mathcal{E}(A')$, the multiset 
$\mathcal{E}(B' \cup E' \cup A')$ is unbroken and contains $0$.  Similarly, the multiset 
$\mathcal{E}(A' \cup F' \cup C')$ is unbroken and contains $0$.  Therefore,  
$\mathcal{E}(B' \cup E' \cup A') \cup \mathcal{E}(A' \cup F' \cup C')$ is unbroken and contains $0$.  Any remaining eigenvalue in $\mathcal{E}(\sigma_1)$ is symmetric to an eigenvalue in $\mathcal{E}(B' \cup E' \cup A') \cup \mathcal{E}(A' \cup F' \cup C')$.  Therefore, $\mathcal{E}(\sigma_1)$ is unbroken.  

If $|\alpha_i(\widehat{F})| = 3$ for all $i \in \{a_2 - 1, a_2 - 2, ..., \frac{a_2}{2}+1\}$, then $\mathcal{E}(B' \cup E') \setminus \mathcal{E}(B')$ is not necessarily unbroken.  Moreover, any numbers preventing $\mathcal{E}(B' \cup E') \setminus \mathcal{E}(B')$ from being unbroken must be congruent $(\bmod ~3)$.  
Let $x$ be any such number.  Observe that 
\[\mathcal{E}(B' \cup E') \setminus \mathcal{E}(B') = 
-[\mathcal{E}(C' \cup F') \setminus \mathcal{E}(C')].\]
Then $x$ is symmetric to some number in $\mathcal{E}(C' \cup F') \setminus \mathcal{E}(C')$ and exists somewhere in $\mathcal{E}(\sigma_1)$.  The rest of the argument follows similarly to the previous argument: that is,  when $|\alpha_i(\widehat{F})| \neq 3$ for each $i \in \{a_2 - 1, a_2 - 2, ..., \frac{a_2}{2}+1\}$.  

If $a_2$ is odd, then the eigenvalues in $\mathcal{E}(B' \cup E') \setminus \mathcal{E}(B')$ are the same as in the previous case in addition to $\gamma + \alpha_{\lf\frac{a_2}{2}\rf}(\widehat{F})$.  But since 
\[\alpha_{a_2+1}(\widehat{F})+ \alpha_{a_2+2}(\widehat{F}) + ... + \alpha_{a_1+a_2-1}(\widehat{F}) = 1,\] 
the multiset
\[\mathcal{E}(B' \cup E' \cup A') - \left[\mathcal{E}(B' \cup E') ~\cup~ \mathcal{E}(A')\right]\] 
is unbroken.  That $\mathcal{E}(\sigma_1)$ is unbroken follows from the argument where $a_2$ is even.  
\end{proof}

\begin{remark}
Any type-A orbit meander consists entirely of maximally connected components of type A.  So the proof of Theorem \ref{thm:symmetry} shows that the spectrum of any Frobenius type-A seaweed forms a symmetric distribution.  The type-A unbroken spectrum result follows with the same induction proof, but with a different inductive base.  That is, any Frobenius type-A
orbit meander is the result of a sequence of moves in Lemma \ref{Expansion} starting from $\mathcal{M}_1(1 \dd \emptyset)$.  We note that the base has spectrum $\{0,1\}$, which is unbroken.  
\end{remark}

\begin{remark}
The proof of Theorem \ref{thm:unbroken}, in particular, the proof for Case \ref{unbroken3}, would have held even if the simple eigenvalues from Lemma \ref{lem:simple} were bounded in absolute value by $4$.  The type-A proof by Coll et al. in \textbf{\cite{Coll typea}} requires the bound to be $3$.  
\end{remark}

\section{Afterword}
In a forthcoming article, we show that Theorem \ref{thm:main} is also true for type-D Frobenius seaweeds.  The type-D case is complicated by the bifurcation point in the type-D Dynkin diagram, and requires several  different induction bases.  Moreover, 
the computation of the simple eigenvalues 
in type-D case is both different and more involved than in the type B and C cases.

\end{document}